\pgfplotsset{compat=1.15}
\newtheorem{thm}{Theorem}[section]
\newtheorem*{thm*}{Theorem}
\newtheorem*{cor*}{Corollary}	
\newtheorem{lemma}[thm]{Lemma}
\newtheorem{proposition}[thm]{Proposition}
\newtheorem{theorem}{Theorem}
\renewcommand*{\thetheorem}{\Alph{theorem}}
\theoremstyle{definition}
\newtheorem{definition}[thm]{Definition} 	
\newtheorem{rema}[thm]{Remark} 
\newtheorem*{definition*}{Definition}    	
\newtheorem*{ack}{Acknowledgements}
\numberwithin{equation}{section}
\DeclareMathOperator{\dist}{dist}
\DeclareMathOperator{\diam}{diam}
\DeclareMathOperator{\GH}{GH}
\renewcommand{\H}{\mathrm{H}}
\DeclareMathOperator{\dis}{dis}
\DeclareMathOperator{\net}{net}
\DeclareMathOperator{\Lip}{Lip}
\DeclareMathOperator{\Conv}{Conv}
\newcommand{\RR}{\mathbb{R}}
\newcommand{\eps}{\varepsilon}
\newcommand{\RRR}{\mathscr{R}}
\newcommand{\NN}{\mathbb{N}}
\DeclareMathOperator{\opt}{opt}
\newcommand{\toGH}{\xrightarrow[]{\mathsf{GH}}}
\newcommand{\BB}{\overline{B}}
\newcommand{\calA}{\mathcal{A}}
\newcommand{\calM}{\mathcal{M}}
\newcommand{\calS}{\mathcal{S}}
\newcommand{\calB}{\mathcal{B}}
\newcommand{\calX}{\mathcal{X}}
\newcommand{\calY}{\mathcal{Y}}
\newcommand{\dd}{\mathbf{d}}
\newcommand{\tri}{\triangle}
\begin{document}


\title{Metric Pairs and Tuples in Theory and Applications}


\author[A.~Ahumada Gómez]{Andrés Ahumada Gómez}

\author[M.~Che]{Mauricio Che}

\author[M.~Cuerno]{Manuel Cuerno}


\address[A.~Ahumada Gómez]{Department of Mathematics, CUNEF Universidad, Madrid, Spain}
\email{andres.ahumada@cunef.edu}

\address[M.~Che]{Faculty of Mathematics, University of Vienna, Austria}
\email{mauricio.adrian.che.moguel@univie.ac.at}

\address[M.~Cuerno]{Department of Mathematics, CUNEF Universidad, Madrid, Spain}
\email{manuel.mellado@cunef.edu}


\date{\today}

\subjclass[2020]{30L15, 53C23, 53C20, 55N31}


\begin{abstract}
We present theoretical properties of the space of metric pairs equipped with the Gromov--Hausdorff distance. First, we establish the classical metric separability and the geometric geodesicity of this space. Second, we prove an Arzelà--Ascoli-type theorem for metric pairs. Third, extending a result by Cassorla, we show that the set of pairs consisting of a $2$-dimensional compact Riemannian manifold and a $2$-dimensional submanifold with boundary that can be isometrically embedded in $\mathbb{R}^3$ is dense in the space of compact metric pairs. Finally, to broaden the scope of potential applications, we describe 
scenarios where the Gromov--Hausdorff distance between metric pairs or tuples naturally arises.
\end{abstract}

\setcounter{tocdepth}{1}

\maketitle

\section{Introduction}

The Gromov--Hausdorff distance and convergence are fundamental concepts in metric geometry, used to quantitatively compare and study metric spaces. Indeed, the Gromov--Hausdorff distance was introduced as a way to measure how close two different compact metric spaces are. 
Introduced by Edwards \cite{edwards} and rediscovered by Gromov in a series of papers \cite{gromov_1, gromov_2} (see \cite{gromov_3} for an English translation), this concept is a generalization of the classical Hausdorff distance, which compares closed subsets of the same metric space. 


The Gromov--Hausdorff framework has led to numerous important theoretical contributions. One of them is the celebrated Gromov's compactness theorem \cite{gromov_1}, which has motivated fruitful theories of metric measure spaces that naturally arise as limits of sequences of Riemannian manifolds with lower curvature bounds in the Gromov--Hausdorff topology, such as Alexandrov spaces \cite{Yu_Burago_1992}, Ricci limits \cite{cheeger-colding1,cheeger-colding2,cheeger-colding3} and, more generally, (R)CD spaces \cite{ambrosio-gigli-savare,gigli,lott-villani,sturm1,sturm2}. See \cite{alexander-bishop-warped,galaz_guijarro_alexandrov, guijarro_survey,guijarro-santos,ohta, petrunin} for a glimpse of these developments. The Gromov--Hausdorff framework has also been used to formulate the stability of other geometric conditions \cite{cavallucci_GH, fukaya, kasue_GH}. 

Beyond its theoretical significance, the Gromov--Hausdorff distance has found numerous applications. For example, Topological Data Analysis (TDA) uses it as a foundational tool in proving stability theorems for objects like persistence diagrams (see \cite{chazal2009gromov}; see also \cite{Ballester2026} for a brief introduction to TDA). Moreover, it continues to play a key role in recent work on stability theory and computational methods; see \cite{adams2025hausdorffvsgromovhausdorffdistances,bubenik2018interleavinggromovhausdorffdistance,torrascasas2024stability,wee2025cohomology}.

In \cite{che_et_al_2,che_diagrams}, Che and collaborators introduced a notion of Gromov--Hausdorff convergence for metric pairs, which extends the classical pointed Gromov--Hausdorff one (see, for example, \cite{buragoburagoivanov,herron2016}). Subsequently, Ahumada Gómez and Che established the foundations of this theory for metric pairs and tuples, proving results such as embedding, completeness, and compactness theorems \cite{aagmc}. Additionally, they presented a relative version of Fukaya’s theorem on quotient spaces under Gromov--Hausdorff equivariant convergence \cite{fukaya}, as well as a version of the Grove–Petersen–Wu finiteness theorem \cite{grove-petersen-wu} for stratified spaces.

There are also recent references to the formalism of metric pairs. Alattar defined an equivariant version of the convergence of metric pairs in \cite{alattar2024}. Additionally, Sakovich and Sormani introduced multiple notions of convergence of space-times, one of them referred to as the Future-Developed Intrinsic Distance, where the structure of metric pair arises when one considers space-times endowed with the so-called null distance and the distinguished subsets are Cauchy hypersurfaces \cite{sakovich-sormani2025}.

There are other applied contexts where having a notion of distance between metric pairs and tuples seems to be useful. For instance, in spaces of hypernetworks and graphs, as defined in \cite{needham_original, needham2024}. Also in spaces of simplicial complexes, as studied in \cite{nnadi}. Finally, within the theoretical development of TDA, notions analogous to the Gromov--Hausdorff distance considered in this work have been introduced for spaces of persistence diagrams \cite{torras_rocio_2,torras_rocio_1,torrascasas2024stability}.

In this paper, we continue building the theory of Gromov--Hausdorff convergence and distance for metric pairs and tuples as well as presenting some applications that broaden the scope of this tool. First, after showing an equivalent formulation for the Gromov--Hausdorff distance for metric pairs in the sense of correspondences, we begin proving that this distance is geodesic and separable, as well as a generalization of the Arzelà--Ascoli theorem for metric pairs. Together, these results provide a collection of key structural properties, demonstrating that the space of metric pairs has a well-behaved metric geometry suitable for approximation, convergence, and stability analyses.
 The following theorems present some specific notation that will be introduced in sections \ref{s:preliminaries} and \ref{s:geodesics, separability and arzela-ascoli}. 

\begingroup
\def\thetheorem{\ref{thm:geodesic}}
\begin{theorem}
        The metric space $(\GH_1,d_{\GH})$ of compact metric pairs, endowed with the Gromov--Hausdorff distance for metric pairs (see definition \ref{def:Gromov--Hausdorff distance for metric pairs}), is geodesic. Moreover, let $(X,A),(Y,B)\in \GH_1$, $R\in \RRR^{\opt}((X,A),(Y,B))$ (see definitions \ref{def:pair_correspondence} and \ref{def:optimal correspondence}), and a curve $\gamma_R:[0,1]\to \GH_1$ between $(X,A)$ and $(Y,B)$ such that
        \[
        \gamma_R(0):= (X,A),\quad \gamma_R(1):=(Y,B)\quad\text{and}\quad \gamma_R(t):=(R,R|_{A\times B}, d_{\gamma_R(t)})\quad\text{for}\quad t\in(0,1),
        \]
        where for each $(x,y),(x',y')\in R$ and $t\in (0,1)$, the metric $d_{\gamma_R(t)}$ is given by
        \[
        d_{\gamma_R(t)}((x,y),(x',y')):=(1-t)\,d_X(x,x')+t\,d_Y(y,y').
        \]Then, $\gamma_R$ is a geodesic.
    \end{theorem}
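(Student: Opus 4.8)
The plan is to first confirm that each $\gamma_R(t)$ is a legitimate element of $\GH_1$, and then to pin down $d_{\GH}(\gamma_R(s),\gamma_R(t))$ exactly by sandwiching it between matching upper and lower bounds. For the first point, note that for $t\in(0,1)$ the function $d_{\gamma_R(t)}$ is a convex combination of the two pseudometrics obtained by pulling back $d_X$ and $d_Y$ along the coordinate projections of $R$, hence a pseudometric; and since $t\in(0,1)$, the vanishing $d_{\gamma_R(t)}((x,y),(x',y'))=0$ forces both $d_X(x,x')=0$ and $d_Y(y,y')=0$, so $(x,y)=(x',y')$ and $d_{\gamma_R(t)}$ is a genuine metric. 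Because $d_{\gamma_R(t)}$ is bi-Lipschitz equivalent to the restriction of $d_X+d_Y$ (with constants $\min\{t,1-t\}$ and $\max\{t,1-t\}$) and $R$ is closed in the compact space $X\times Y$, the triple $(R,R|_{A\times B},d_{\gamma_R(t)})$ is a compact metric pair, so $\gamma_R(t)\in\GH_1$.

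The heart of the argument is a distortion estimate. For $0<s<t<1$ I would use the identity relation $\{((x,y),(x,y)):(x,y)\in R\}$, which is a pair-correspondence between $\gamma_R(s)$ and $\gamma_R(t)$ since it restricts to the identity on the common distinguished subset $R|_{A\times B}$. A direct computation gives
\[
d_{\gamma_R(s)}((x,y),(x',y'))-d_{\gamma_R(t)}((x,y),(x',y'))=(t-s)\bigl(d_X(x,x')-d_Y(y,y')\bigr),
\]
so, taking the supremum over the whole relation, its distortion is at most $(t-s)\,\dis(R)$. For the endpoint comparisons I would instead use the projection relations $\{(x,(x,y)):(x,y)\in R\}$ between $(X,A)$ and $\gamma_R(t)$, and symmetrically $\{((x,y),y):(x,y)\in R\}$ between $\gamma_R(s)$ and $(Y,B)$; these are pair-correspondences by surjectivity of the coordinate projections of $R$ and of $R|_{A\times B}$, and the analogous computations give distortions $t\,\dis(R)$ and $(1-s)\,\dis(R)$. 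Since $R$ is optimal, $\tfrac12\dis(R)=d_{\GH}((X,A),(Y,B))=:L$, whence in every case
\[
d_{\GH}(\gamma_R(s),\gamma_R(t))\le |t-s|\,L.
\]

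Finally I would upgrade this to equality by the triangle inequality: for $0\le s\le t\le 1$,
\[
L=d_{\GH}(\gamma_R(0),\gamma_R(1))\le s\,L+d_{\GH}(\gamma_R(s),\gamma_R(t))+(1-t)\,L,
\]
which forces $d_{\GH}(\gamma_R(s),\gamma_R(t))\ge (t-s)\,L$ and hence $d_{\GH}(\gamma_R(s),\gamma_R(t))=|t-s|\,L$. This is exactly the assertion that $\gamma_R$ is a constant-speed minimizing geodesic; since optimal correspondences exist between any two elements of $\GH_1$ (a compactness argument), this simultaneously yields that $(\GH_1,d_{\GH})$ is geodesic.

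The step I expect to demand the most care is confirming that the three relations are genuine pair-correspondences, so that they are admissible competitors in the definition of $d_{\GH}$, and that their distortions are evaluated over the full relation (which automatically subsumes the restriction to $R|_{A\times B}$, as $R|_{A\times B}\subseteq R$). A secondary point to check is the nonemptiness of $R|_{A\times B}$, which follows from $A,B\neq\emptyset$ together with the fact that $R|_{A\times B}$ is itself a correspondence between $A$ and $B$. The metric interpolation is engineered precisely so that the distortion computation collapses to the clean linear identity above, which is what makes the optimality of $R$ translate directly into the geodesic property.
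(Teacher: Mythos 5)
Your proposal is correct and follows essentially the same route as the paper: the diagonal correspondence on $R$ for interior times and the graph-of-projection correspondences $\{(x,(x,y))\}$, $\{((x,y),y)\}$ for the endpoints, with the distortion of each scaling linearly (in both the full and the restricted suprema) to give $\dis = |t-s|\,\dis(R)$, hence the upper bound $|t-s|\,d_{\GH}((X,A),(Y,B))$ via the correspondence characterization of $d_{\GH}$. The only cosmetic differences are that you verify $\gamma_R(t)\in\GH_1$ explicitly and prove the matching lower bound by the triangle inequality inline, whereas the paper delegates that step to a cited lemma; your sign identity $(t-s)\bigl(d_X(x,x')-d_Y(y,y')\bigr)$ is in fact the correct version of a computation the paper writes with a typo.
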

\addtocounter{theorem}{-1}
\endgroup

\begingroup
\def\thetheorem{\ref{thm:separable}}
\begin{theorem}
    The metric space $(\GH_1,d_{\GH})$ is separable.
\end{theorem}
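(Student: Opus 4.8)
The plan is to exhibit an explicit countable dense subset, mimicking the classical argument that the space of compact metric spaces is separable while carrying the distinguished subset along at every stage. Let $\calD$ denote the set of isometry classes of finite metric pairs $(S,B)$ in which $S$ is a finite set equipped with a metric taking only rational values and $B\subseteq S$. For each fixed cardinality $n$, a rational metric on an $n$-point set is determined by a finite tuple of rationals (subject to symmetry and the triangle inequality), and an $n$-point set has finitely many subsets; hence $\calD$ is countable. I claim $\calD$ is dense in $(\GH_1,d_{\GH})$.

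Fix $(X,A)\in\GH_1$ and $\eps>0$. The first step, which is the genuinely new feature compared with the unpaired case, is to build a finite net of $X$ that simultaneously nets $A$. Since $A$ is a closed subset of the compact space $X$, both $A$ and $X$ are totally bounded, so I would choose a finite $\eps$-net $S_A\subseteq A$ of $A$ and a finite $\eps$-net $S_X\subseteq X$ of $X$, and set $S:=S_A\cup S_X$ and $B:=S\cap A$, each carrying the restriction of $d_X$. Then $B\supseteq S_A$ is an $\eps$-net of $A$ and $S$ is an $\eps$-net of $X$.

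Next I would estimate $d_{\GH}((X,A),(S,B))$ using the correspondence formulation. For every $x\in X$ choose a point $p(x)\in S$ with $d_X(x,p(x))\le\eps$, taking $p(a)\in B$ whenever $a\in A$ (possible since $B$ is an $\eps$-net of $A$), and set
\[
R:=\{(x,p(x)):x\in X\}\cup\{(s,s):s\in S\}.
\]
Because $S\subseteq X$, the two projections of $R$ are surjective, so $R$ is a correspondence; moreover $R\cap(A\times B)$ is a correspondence between $A$ and $B$, as $p(a)\in B$ for $a\in A$ and $(b,b)\in R$ for every $b\in B\subseteq A$. For any $(x,s),(x',s')\in R$, the triangle inequality together with $d_X(x,s),\,d_X(x',s')\le\eps$ gives $|d_X(x,x')-d_X(s,s')|\le 2\eps$, so $\dis(R)\le 2\eps$ and hence $d_{\GH}((X,A),(S,B))\le\eps$ with the half-distortion normalization.

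Finally I would replace $d_X|_S$ by a rational metric. By the standard density of rational finite metrics there is $(S,B,d')\in\calD$ with $|d'(s,s')-d_X(s,s')|\le\eps$ for all $s,s'\in S$; concretely, one first perturbs $d_X|_S$ into the interior of the polyhedral cone of metrics (for instance by a tiny convex combination with the discrete metric, which makes every nontrivial triangle inequality strict) and then rounds each value to a nearby rational, which keeps all the inequalities valid. The identity correspondence between $(S,B,d_X|_S)$ and $(S,B,d')$ has distortion at most $\eps$, so $d_{\GH}((S,B,d_X|_S),(S,B,d'))\le\tfrac12\eps$. The triangle inequality for $d_{\GH}$ then yields $d_{\GH}\big((X,A),(S,B,d')\big)\le\tfrac32\eps$, and since $\eps$ was arbitrary this proves density. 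I expect the only delicate point to be the coordinated net construction of the second step, ensuring $B=S\cap A$ nets $A$ so that the correspondence restricts correctly to $A\times B$; the rational perturbation, while mildly fiddly in preserving the triangle inequality, is routine.
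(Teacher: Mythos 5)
Your argument is correct, and it produces the same countable dense family as the paper (finite metric spaces with rational distances, together with arbitrary subsets), but your density proof takes a genuinely different route. The paper does not construct nets or perturb metrics at all: it invokes, as a black box, the classical fact that finite rational metric spaces are dense in the unpaired Gromov--Hausdorff space, obtains a correspondence $R\subset X\times X'$ with $\overline{\dis}(R)<2\eps$, and then simply defines $A':=\{a'\in X':(a,a')\in R\ \text{for some}\ a\in A\}$. With this choice $R$ is automatically a pair correspondence between $(X,A)$ and $(X',A')$, and since both suprema in the pair distortion run over pairs of elements of $R$, each is bounded by $\overline{\dis}(R)$, giving $\dis(R)\leq\overline{\dis}(R)<2\eps$ and hence $d_{\GH}((X,A),(X',A'))<\eps$ in one line. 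Your proof instead rebuilds the classical argument inside the relative setting: the coordinated net (an $\eps$-net of $A$ extended to one of $X$, with $B=S\cap A$), the explicit correspondence $\{(x,p(x))\}\cup\{(s,s)\}$, and the rational perturbation of $d_X|_S$ via the interior of the metric cone --- a step the paper never needs, because its approximating space already carries rational distances by construction. Each approach has its merits: the paper's reduction is shorter and isolates a reusable principle (any unpaired approximant upgrades to a paired one by pushing $A$ through the correspondence, since the pair distortion never exceeds the classical distortion), whereas yours is self-contained, makes no appeal to the unpaired density theorem, and gives explicit quantitative control ($3\eps/2$) at every stage. All the steps you flagged as delicate --- the coordinated net, the restriction of $R$ to $A\times B$, and the triangle-inequality-preserving rounding --- do go through as you describe.
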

\addtocounter{theorem}{-1}
\endgroup

\begingroup
\def\thetheorem{\ref{thm:arzela}}
\begin{theorem}
Consider compact metric pairs $(X_i,A_i)\toGH (X_\infty,A_\infty)$ and $(Y_i,B_i)\toGH (Y_\infty,B_\infty)$, and equicontinuous relative maps $f_i\colon (X_i,A_i)\to (Y_i,B_i)$, for all $i\in \NN$  (see \ref{def:relative map}). Then there exists a subsequence of $\{f_i\}_{i\in\NN}$ converging to a continuous relative map $f_\infty\colon (X_\infty,A_\infty)\to (Y_\infty,B_\infty)$. Moreover, if $\Lip(f_i)\leq K$ and $\Lip(f_i|_{A_i}) \leq L$ for all $i\in \NN$ then $\Lip(f_\infty)\leq K$ and $\Lip(f_\infty|_{A_\infty})\leq L$.
\end{theorem}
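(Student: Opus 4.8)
The plan is to reduce the statement to the classical Arzelà--Ascoli argument by realizing all four Gromov--Hausdorff convergences inside fixed ambient spaces. First I would invoke the embedding theorem for convergent sequences of compact metric pairs (from the foundational results the authors build on, see the discussion of \cite{aagmc}) to embed all $X_i$ and $X_\infty$ isometrically into a single compact metric space $Z$ in such a way that $X_i\to X_\infty$ and $A_i\to A_\infty$ in the Hausdorff distance of $Z$; symmetrically, I would embed all $Y_i$ and $Y_\infty$ into a compact metric space $W$ with $Y_i\to Y_\infty$ and $B_i\to B_\infty$ in the Hausdorff distance of $W$. After this reduction each $f_i$ is a map between the subsets $X_i,Y_i$ of the fixed spaces $Z,W$, and ``convergence of $f_i$ to $f_\infty$'' acquires a concrete meaning: whenever $x_i\in X_i$ satisfy $x_i\to x_\infty\in X_\infty$ in $Z$, one should have $f_i(x_i)\to f_\infty(x_\infty)$ in $W$. (One could equivalently phrase everything through the correspondence formulation of definitions \ref{def:pair_correspondence} and \ref{def:optimal correspondence}, but the ambient embedding is cleaner for a diagonal argument.)

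Second, I would construct $f_\infty$ by diagonal extraction. Fix a countable dense set $\{p_k\}_{k}\subset X_\infty$ and, for each $k$, choose approximants $p_k^i\in X_i$ with $p_k^i\to p_k$, which is possible since $X_i\to X_\infty$ in $Z$. For fixed $k$ the points $f_i(p_k^i)$ lie in the compact set $W$, so after a diagonal extraction over $k$ we obtain a subsequence (still denoted $\{f_i\}$) along which $f_i(p_k^i)$ converges for every $k$; set $f_\infty(p_k):=\lim_i f_i(p_k^i)$. The equicontinuity hypothesis is exactly what makes this definition robust: it guarantees both that $f_\infty(p_k)$ is independent of the chosen approximants and that $f_\infty$ is uniformly continuous on $\{p_k\}_k$, hence extends uniquely to a continuous map $f_\infty\colon X_\infty\to W$. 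A further use of equicontinuity, together with the density of $\{p_k\}_k$, upgrades pointwise convergence to the convergence $f_i\to f_\infty$ in the sense above.

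Third, I would verify that $f_\infty$ is a relative map and that the Lipschitz bounds pass to the limit. Given $a\in A_\infty$, pick $a_i\in A_i$ with $a_i\to a$; then $f_i(a_i)\in B_i$ and $f_i(a_i)\to f_\infty(a)$, so $f_\infty(a)$ is a limit of points of the $B_i$. Since $B_i\to B_\infty$ in the Hausdorff distance and $B_\infty$ is closed, $f_\infty(a)\in B_\infty$, i.e. $f_\infty(A_\infty)\subseteq B_\infty$, so $f_\infty$ is a relative map. For the Lipschitz estimates, given $x,x'\in X_\infty$ with approximants $x_i,x_i'$, the inequality $d(f_i(x_i),f_i(x_i'))\leq K\,d(x_i,x_i')$ passes to the limit and yields $d(f_\infty(x),f_\infty(x'))\leq K\,d(x,x')$; running the same argument with approximants in $A_i$ gives $\Lip(f_\infty|_{A_\infty})\leq L$.

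The main obstacle I anticipate is not any single estimate but the bookkeeping forced by the varying domains and codomains: formalizing convergence of maps between different spaces, checking that $f_\infty$ is independent of all auxiliary choices (the ambient embeddings and the approximating points), and confirming that this notion of convergence matches the definition of convergence of relative maps the paper adopts. Each verification is routine once equicontinuity is in hand, but assembling them so that the limit is canonically attached to the limit pairs $(X_\infty,A_\infty)$ and $(Y_\infty,B_\infty)$, rather than to one particular realization, is where the care is needed.
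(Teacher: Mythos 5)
Your proposal is correct and follows essentially the same route as the paper: the paper likewise invokes the embedding theorem of \cite{aagmc} to realize both sequences inside fixed ambient pairs with Hausdorff convergence, then delegates the diagonal/equicontinuity argument you spell out to the pointed-space Arzel\`a--Ascoli result \cite[Proposition 5.1]{herron2016}, and passes the Lipschitz bounds to the limit by the same triangle-inequality computation on approximating sequences. The only difference is expository: you write out the diagonal extraction and the verification that $f_\infty(A_\infty)\subseteq B_\infty$ explicitly, whereas the paper cites them.
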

\addtocounter{theorem}{-1}
\endgroup

As an application of this framework, in section \ref{s:cassorla} we prove a version of the main theorem in \cite{cassorla}, stating the approximability of metric pairs by surfaces.

\begingroup
\def\thetheorem{\ref{thm:cassorla}}
\begin{theorem}
If $\calM$ is the set of compact length metric pairs and $\calS$ is the subset of $\calM$ consisting of 2-dimensional Riemannian manifolds which can be isometrically embedded in $\RR^3$ together with a 2-dimensional submanifold with boundary, then $\bar{\calS}=\calM$ in $d_{\GH_1}$.
\end{theorem}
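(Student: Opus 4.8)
The plan is to adapt Cassorla's construction to the relative setting, reducing the problem to two approximation steps linked by the triangle inequality for $d_{\GH}$. Given a compact length metric pair $(X,A)\in\calM$ and $\eps>0$, I would first approximate $(X,A)$ by a \emph{metric graph pair} $(G,G_A)$, where $G$ is a finite connected metric graph and $G_A\subseteq G$ is a subgraph, and then thicken $(G,G_A)$ to a surface pair $(M,N)\in\calS$. Since
\[
d_{\GH}((X,A),(M,N))\le d_{\GH}((X,A),(G,G_A))+d_{\GH}((G,G_A),(M,N)),
\]
controlling each term by a multiple of $\eps$ suffices, and the inclusion $\calS\subseteq\calM$ is immediate.

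For the first step, I would fix an $\eps$-net $Q$ of $A$ and extend it to an $\eps$-net $P\supseteq Q$ of $X$ by adjoining points of $X\setminus A$. Joining two net points by an edge of length equal to their distance whenever that distance is at most $3\eps$ produces a finite connected graph $G$ whose path metric $d_G$ satisfies $|d_G-d_X|=O(\eps)$ on $P$: the upper bound comes from tracking a minimizing path of $X$ by consecutive net points, and the lower bound from applying the triangle inequality to the edge lengths. I would let $G_A$ be the subgraph spanned by $Q$, equipped with the \emph{restricted} metric $d_G|_{G_A}$. Assigning to each point of $X$ a nearby point of $P$ and to each point of $A$ a nearby point of $Q$ yields a pair correspondence whose distortion is $O(\eps)$ at both levels; crucially, since $G_A$ carries $d_G|_{G_A}$ rather than an intrinsic metric, the estimate $d_G|_{G_A}\approx d_X|_A$ is immediate from $d_G\approx d_X$. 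Hence $d_{\GH}((X,A),(G,G_A))=O(\eps)$.

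For the second step, I would embed $G$ in $\RR^3$ and thicken it to a smooth closed surface $M$, replacing each edge by a thin tube and each vertex by a small sphere-with-holes junction glued smoothly, exactly as in Cassorla. Choosing the tube radius small compared with $\eps$ makes the intrinsic (Riemannian) distance of $M$ approximate $d_G$, since travelling along a thin tube costs essentially its length while travelling across a tube or through a junction is negligible. I would then define $N$ to be the portion of $M$ lying over $G_A$, extended by a short collar into each tube of $G\setminus G_A$ incident to a vertex of $Q$, so that $\partial N$ is a disjoint union of smooth cross-sectional circles; this makes $N$ a compact $2$-dimensional submanifold with boundary, and the projection $M\to G$ restricts to a pair correspondence sending $N$ onto $G_A$ with distortion comparable to the tube radius. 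Thus $d_{\GH}((G,G_A),(M,N))=O(\eps)$, and combining the two steps produces $(M,N)\in\calS$ with $d_{\GH}((X,A),(M,N))=O(\eps)$.

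The hard part will be making the second step quantitative in the \emph{relative} sense: one must re-examine Cassorla's metric estimate to confirm that the correspondence induced by the thickening is genuinely a pair correspondence whose distortion is small both on $M$ and on its restriction to $N$, and that $N$ is honestly a submanifold with boundary (smoothness at the junction spheres, smoothness and transversality of the boundary circles, and global embeddedness). I would also treat the degenerate cases separately: when $A$ is lower dimensional, say a point or an arc, the construction still yields a genuine $2$-dimensional $N$ that is Gromov--Hausdorff close to $A$ by folding, and when $A=X$ one simply takes $N=M$ with empty boundary. Once these points are settled, letting $\eps\to 0$ gives $\overline{\calS}=\calM$.
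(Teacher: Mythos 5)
Your overall route---replace $(X,A)$ by a metric graph pair, thicken the graph inside $\RR^3$ to a surface pair, and combine the two estimates by the triangle inequality---is exactly the paper's (which follows Cassorla), so the issue is whether your quantitative claims hold. The genuine gap is in your first step: with a \emph{single} scale (an $\eps$-net $P$, edges between net points at distance at most $3\eps$), the path metric $d_G$ does \emph{not} satisfy $|d_G-d_X|=O(\eps)$ on $P$. Tracking a minimizing path of length $L$ by consecutive net points forces subdivision into pieces of length at most $\eps$ (otherwise the hops exceed the $3\eps$ threshold), and each of the roughly $L/\eps$ hops can overshoot by $2\eps$; this argument therefore only gives $d_G\le 3\,d_X+O(\eps)$, a multiplicative bound whose additive error is of order $\diam(X)$, not $\eps$. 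The failure is not an artifact of the argument: let $X$ be a ``comb,'' a spine of length $1$ with teeth of length $0.4\eps$ attached at every multiple of $\eps$. The tips of the teeth form a minimal $\eps$-dense set (they are $1.8\eps$-separated and every point lies within $0.9\eps$ of a tip), the only admissible edges join tips at most two teeth apart (distances $1.8\eps$ and $2.8\eps$), and each such edge has length at least $1.4$ times its progress along the spine; hence the graph distance between the two extreme tips is at least $1.4$ while their distance in $X$ is about $1$. Any correspondence built from such a graph has distortion comparable to $\diam(X)$, so your claimed inequality $d_{\GH}((X,A),(G,G_A))=O(\eps)$ breaks down and the density argument collapses.

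The repair is precisely the two-scale device that is the crux of the construction the paper imports from Cassorla: the vertex set is a minimal $10^{-n}$-dense set, but edges join vertices at distance up to $2^n 10^{-n}$, i.e.\ the edge threshold is $2^n$ times the net scale. Each hop then overshoots by at most $2\cdot 10^{-n}$ over a run of length about $2^n 10^{-n}$, so the multiplicative distortion is $2^{\mu(n)}\to 1$ (lemma \ref{lema2}), yielding $d_{\GH}((X,A),(L,K))<\bigl(2^{\mu(n)}-1\bigr)\diam(X)+5^{-n}\to 0$; note that the error has the form (multiplicative defect)$\,\cdot\,\diam(X)$ plus a small additive term, never a clean $O(\eps)$ with absolute constants. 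So in your first step you must take the net at scale $\nu$ and the edge threshold $\kappa$ with both $\kappa$ and $\nu/\kappa$ small (how small depending on $\diam(X)$ and the target accuracy), and rerun the estimate. The rest of your outline is consistent with the paper's proof: putting the restricted metric $d_G|_{G_A}$ on the subgraph is exactly what the paper does with $d_L$ on the subcomplex $K$ (in a metric pair the subset always carries the restricted metric), and the thickening step, including the collar, smoothness, and degenerate-case issues you flag, is carried out there in the same way.
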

\addtocounter{theorem}{-1}
\endgroup

Finally, to highlight the potential of the the Gromov--Hausdorff distances for metric pairs and tuples, in section \ref{s:applications} we compare the metrics between metric tuples with those that have been defined for spaces of hypernetworks and graphs \cite{needham_original, needham2024}; spaces of simplicial complexes \cite{nnadi}; and spaces of persistence matching diagrams \cite{torras_rocio_2, torras_rocio_1,  torrascasas2024stability}.

\begin{ack}
   The authors would like to thank Professor Christina Sormani, for fruitful conversations,  and colleagues and friends for their invaluable comments at an early stage of this manuscript. A. Ahumada Gómez and M. Cuerno have been supported by the research grant PID2024-158664NB-C2 from Miciu/AEI/10.13039/501100011033/FEDER/UE. M. Cuerno has also been supported by the research grant PID2021-124195NB-C32 from the Ministerio de Econom\'ia y Competitividad de Espa\~{na} (MINECO) and by the project “Charting political ideological landscapes in Europe: Fault lines and opportunities (POL-AXES)” - Programa Primas y Problemas de la Fundación BBVA 2023. M.~Che was funded in part by the Austrian Science Fund (FWF) [Grant DOI: 10.55776/STA32], and in part by the Deutsche Forschungsgemeinschaft (DFG, German Research Foundation) under Germany’s Excellence Strategy – EXC-2047/1 – 390685813. Part of this research was carried out at the Hausdorff Institute of Mathematics in Bonn, during the trimester program “Metric Analysis”. M.~Che wishes to express his gratitude to this institution and the organizers of the program for the stimulating atmosphere and the excellent working conditions.
   
   For open access purposes, the authors have applied a CC BY public copyright license to any author-accepted manuscript version arising from this submission.
\end{ack}

\section{Preliminaries}\label{s:preliminaries}

\subsection{Gromov-Hausdorff distance}
\mbox{}%

\vspace{5pt}

We present the basic definitions of the Hausdorff and Gromov--Hausdorff distance. For details and proofs of the following, we refer the reader to \cite{buragoburagoivanov}.

We begin with the definition of the \textit{Hausdorff distance}.

\begin{definition}
    Let $A$, $B$ be subsets of a metric space $(X,d)$. The \textit{Hausdorff distance} between $A$ and $B$ is defined as \[d_\H^{d}(A,B) := \inf\{\eps>0\colon B\subset B_\eps(A) \text{ and }A\subset B_\eps(B)\},\]where $B_\eps(A) = \{x\in X\colon \dist(x,A)<\eps\}.$

    Moreover, the hyperspace $H(X)$ of $X$, i.e., the space of closed subsets of a metric space $X$, endowed with the Hausdorff distance is a metric space whenever $X$ is compact.
\end{definition}

We can globalize this notion to obtain a distance between metric spaces. This is known as the \textit{Gromov--Hausdorff distance}.

\begin{definition}
    Let $(X,d_X)$ and $(Y, d_Y)$ be metric spaces. The \textit{Gromov--Hausdorff distance} between $X$ and $Y$ is defined as 
    \[
    d_{\GH}(X,Y) := \inf\{d_\H^{d_Z}(\varphi(X),\psi(Y))\},
    \]
    where the infimum runs over all metric spaces $(Z,d_Z)$ and isometric embeddings $\varphi\colon X\to Z$ and $\psi\colon Y\to Z$.
    It can also be defined using \textit{admissible metrics} in the disjoint union $X\sqcup Y$ as 
    \[
    d_{\GH}(X,Y) = \inf\{d_\H^\delta(X,Y)\colon \delta \text{ is and admissible metric on }X\sqcup Y\},
    \] 
    where a metric $\delta$ is admissible if $\delta|_{X\times X} = d_X$ and $\delta|_{Y\times Y}=d_Y$.

    Moreover, the space of isometry classes of compact metric spaces, endowed with the Gromov--Hausdorff distance, is a metric space itself.
\end{definition}

\subsection{Gromov-Hausdorff distance for metric pairs and tuples}
\mbox{}%

\vspace{5pt}

The following notions are taken from \cite{aagmc, che_diagrams}.
\begin{definition}
    A \textit{metric pair} $(X,A)$ is a metric space $X$ with a closed subset $A\subset X$. 
    More generally, a \textit{metric tuple} (or $k$-tuple, to emphasize its length) $(X, X_k,\dots,X_1)$ is formed by a metric space $X$ and a nested sequence of closed subsets 
    \[
    X\supseteq X_k\supseteq X_{k-1}\supseteq\dots\supseteq X_1.
    \]
    Observe that a metric $1$-tuple is the same thing as a metric pair.
\end{definition}

\begin{definition}
     Let $(Z,\delta)$ be a metric space, $X,Y \subset Z$ subsets, and $A \subset X$, $B \subset Y$ non-empty closed subsets. The \textit{Hausdorff distance} between $(X,A)$ and $(Y,B)$ is given by 
     \[
     d^\delta_{\H} ((X,A),(Y,B)) := d^\delta_\H (X,Y) + d^\delta_\H(A,B)
     \]
    More generally, if $(Z,\delta)$ is a metric space, $X,Y\subset Z$, and $(X,X_k,\dots,X_1)$ and $(Y,Y_k,\dots,Y_1)$ are metric tuples, the \textit{Hausdorff distance} between them is defined as \[d_\H^{\delta}((X,X_k,\dots,X_1),(Y,Y_k,\dots,Y_1)):=d_\H^{\delta}(X,Y)+\sum_{i=1}^kd_\H^\delta(X_i,Y_i).\]
\end{definition}

In the following definition, and in the sequel, we say that a metric pair $(X,A)$ is compact whenever the space $X$ itself is compact, which of course implies that $A$ is compact as well.

\begin{definition}\label{def:Gromov--Hausdorff distance for metric pairs}
The \textit{Gromov--Hausdorff distance} between two compact metric pairs $(X,A)$ and $(Y,B)$ is defined as
\[
d_{\GH} ((X,A),(Y,B)) := \inf\{d^\delta_\H ((X,A),(Y,B)) : \delta\ \text{admissible on}\ X \sqcup Y\}.
\]    
More generally, the \textit{Gromov--Hausdorff distance} between two metric tuples $(X,X_k,\dots,X_1)$ and $(Y,Y_k,\dots,Y_1)$ is defined as 
\[
d_{\GH}((X,X_k,\dots,X_1),(Y,Y_k,\dots,Y_1)):=\inf\{d_\H^\delta((X,X_k,\dots,X_1),(Y,Y_k,\dots,Y_1))\},
\]
where the infimum is taken over all admissible metrics $\delta$ in $X\sqcup Y$.

We denote by $(\GH_k,d_{\GH})$ the metric space of (isometry classes of) compact metric $k$-tuples (see theorem 2.13 of \cite{aagmc}).
\end{definition}

\subsection{Equivalent formulation}\mbox{}%

\vspace{5pt}

We now present a reformulation of the Gromov--Hausdorff distance for metric pairs by correspondences and distortion, akin to an analogous reformulation of the classical Gromov--Hausdorff distance. Throughout this subsection, $(X,A)$ and $(Y,B)$ will be compact metric pairs. 

\begin{definition}\label{def:pair_correspondence}
    We say that $R\subset X\times Y$ is a \textit{pair correspondence} between $(X,A)$ and $(Y,B)$ if the following conditions hold:    
    \begin{itemize}
    \item for every point $x\in X$ there exists a point $y \in Y$ such that $(x,y)\in R$ and, in particular, for every point $a\in A$ there exists $b\in B$ such that $(a,b)\in R$,
    \item for every point $y\in Y$ there exists a point $x \in X$ such that $(x,y)\in R$ and, in particular, for every point $b\in B$ there exists $a\in A$ such that $(a,b)\in R$. 
    \end{itemize}
    In other words, $R$ is a correspondence between $X$ and $Y$ such that the restriction $R|_{A\times B} := R\cap A\times B$ is a correspondence between $A$ and $B$, in the usual sense.
    
    The \textit{distortion} of $R$ is defined by
    \begin{eqnarray*}
    \dis(R)&=&
    \frac{1}{2}\left(\sup\left\{ \left|d_{X}(x,x')-d_{Y}(y,y')\right|\colon (x,y),(x',y')\in R\right\} \right.\\& & +\left.\sup\left\{ \left|d_{X}(a,a')-d_{Y}(b,b')\right|\colon (a,b),(a',b')\in R|_{A\times B}\right\} \right).
    \end{eqnarray*}
    Observe that any pair correspondence is, in particular, a correspondence in the usual sense between the corresponding metric spaces. Whenever we refer to the classic distortion of a correspondence $R$ between metric spaces, we use the notation $\overline{\dis}(R)$.
\end{definition}

    \begin{thm}\label{teo1}
        For any two metric pairs $(X,A)$ and $(Y,B)$,
        \[
        d_{\GH} ((X,A),(Y,B))=\frac{1}{2}\inf_{R}\dis(R).
        \]
        In other words, $d_{\GH} ((X,A),(Y,B))$ is equal to the infimum of $r>0$ for which there exists a pair correspondence between $(X,A)$ and $(Y,B)$ with $\dis(R)<2r$.
    \end{thm}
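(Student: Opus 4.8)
\emph{Plan.} The proof follows the template of the classical correspondence--distortion formula for $d_{\GH}$ (see \cite{buragoburagoivanov}), establishing the two inequalities separately; the pair structure enters through the requirement that every auxiliary object respect the distinguished subsets $A$ and $B$.

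First I would prove $d_{\GH}((X,A),(Y,B))\le\frac12\inf_R\dis(R)$ by turning a pair correspondence into an admissible metric. Fix a pair correspondence $R$ and set $\delta|_{X\times X}=d_X$, $\delta|_{Y\times Y}=d_Y$, and
\[
\delta(x,y):=\inf\set{d_X(x,x')+\eta+d_Y(y',y)\colon (x',y')\in R}
\]
for $x\in X$, $y\in Y$, where $\eta>0$ is a shift to be calibrated against $\dis(R)$. The first task is to check that $\delta$ is an admissible metric on $X\sqcup Y$: symmetry and the prescribed restrictions are built in, and the only nontrivial point is the triangle inequality for triples that mix points of $X$ and $Y$. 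Unwinding the infimum, each such inequality reduces to the distortion estimate encoded in $\dis(R)$, which prevents a path through the opposite space from creating a shortcut shorter than $d_X$ or $d_Y$; this is exactly where the bound $\overline{\dis}(R)$ is used, and it fixes the admissible size of $\eta$. Once $\delta$ is known to be a metric, every $x\in X$ lies within $\eta$ of its $R$-partner, so $d_\H^\delta(X,Y)$ is controlled; and because $R|_{A\times B}\subset R$ is by definition a correspondence between $A$ and $B$, the very same partners control $d_\H^\delta(A,B)$ through the restricted distortion. Summing the two Hausdorff terms and passing to the infimum over $R$ yields the inequality with the normalization of the statement.

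For the reverse inequality I would start from an arbitrary admissible metric $\delta$ on $X\sqcup Y$ and produce a pair correspondence. Writing $\rho:=d_\H^\delta(X,Y)$ and $\sigma:=d_\H^\delta(A,B)$, I would set
\[
R:=\set{(x,y)\in X\times Y\colon \delta(x,y)\le\rho,\ (x,y)\notin A\times B}\cup\set{(a,b)\in A\times B\colon \delta(a,b)\le\sigma}.
\]
Using compactness one checks directly that this is a pair correspondence in the sense of definition \ref{def:pair_correspondence}: the first set supplies a partner for every point of $X\setminus A$ and of $Y\setminus B$, while the second guarantees that $R|_{A\times B}$ is itself a correspondence between $A$ and $B$. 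The distortion is then estimated by the quadrilateral inequality $\abs{d_X(x,x')-d_Y(y,y')}\le\delta(x,y)+\delta(x',y')$, applied separately on $R$ and on $R|_{A\times B}$, which bounds $\overline{\dis}(R)$ and $\overline{\dis}(R|_{A\times B})$ in terms of $\rho$ and $\sigma$; hence $\dis(R)$ is controlled by $d_\H^\delta((X,A),(Y,B))$ up to the stated normalization. Taking the infimum over $\delta$ furnishes the matching lower bound, and combining the two inequalities finishes the proof.

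I expect the main obstacle to be the verification that the glued metric $\delta$ in the first step is genuinely a metric while \emph{simultaneously} keeping both Hausdorff terms small. The distinguished subsets couple the two scales: the shift that brings $A$ and $B$ together cannot be chosen independently of the one used for $X$ and $Y$ without violating a triangle inequality that travels from a point of $A$ through $X$ (or $Y$) to a far-away point of $B$. Choosing $\eta$ --- and, should a two-level shift be needed, the auxiliary $A$--$B$ shift --- so that the distortion bound certifies \emph{every} mixed triangle inequality is the technical heart of the argument; by contrast, the second direction is essentially the routine thresholding estimate.
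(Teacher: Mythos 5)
Your plan follows the same two-step template as the paper's proof, and your second step (admissible metric $\Rightarrow$ pair correspondence) is essentially correct: with $\rho=d_\H^\delta(X,Y)$ and $\sigma=d_\H^\delta(A,B)$, your two-threshold set $R$ is a pair correspondence, and the quadrilateral inequality $|d_X(x,x')-d_Y(y,y')|\le\delta(x,y)+\delta(x',y')$ gives $\overline{\dis}(R)\le 2\max\{\rho,\sigma\}$ and $\overline{\dis}(R|_{A\times B})\le 2\sigma$, hence $\dis(R)\le 2(\rho+\sigma)$ and $\tfrac12\inf_R\dis(R)\le d_{\GH}((X,A),(Y,B))$; the paper reaches the same conclusion more simply with the single threshold $r>d_{\GH}((X,A),(Y,B))$. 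The genuine gap is in your first step, at exactly the point you defer as ``the technical heart'': the calibration of $\eta$ you hope for does not exist. If $\delta$ is any admissible metric with $\delta(x,y)\le\eta$ for all $(x,y)\in R$ --- and your glued $\delta$ satisfies this, by taking $(x',y')=(x,y)$ in the infimum --- then the same quadrilateral inequality forces $2\eta\ge\overline{\dis}(R)$; moreover, in your glued metric every cross distance is at least $\eta$, so
\[
d_\H^\delta(X,Y)+d_\H^\delta(A,B)\ \ge\ 2\eta\ \ge\ \overline{\dis}(R)\ \ge\ \dis(R),
\]
whereas the statement demands this sum be at most $\tfrac12\dis(R)$. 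The mismatch is structural: $d_{\GH}$ \emph{sums} the two Hausdorff terms while $\dis$ \emph{averages} the two distortions, so the target is unreachable whenever $\dis(R)>0$.

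No alternative construction can close this gap, because the asserted equality is false with these normalizations. Take $X=\{x_0,x_1\}$ with $d_X(x_0,x_1)=1$ and $A=\{x_0\}$, and $Y=\{y_0,y_1\}$ with $d_Y(y_0,y_1)=2$ and $B=\{y_0\}$. The pair correspondence $R_0=\{(x_0,y_0),(x_1,y_1)\}$ has $\overline{\dis}(R_0)=1$ and $\overline{\dis}(R_0|_{A\times B})=0$, so $\tfrac12\inf_R\dis(R)\le\tfrac14$. Yet for any admissible $\delta$, writing $a=\delta(x_0,y_0)=d_\H^\delta(A,B)$, the triangle inequality gives $\delta(x_0,y_1)\ge 2-a$ and $\delta(x_1,y_1)\ge 1-a$, so $d_\H^\delta(X,Y)\ge\min\{\delta(x_0,y_1),\delta(x_1,y_1)\}\ge 1-a$, whence $d_\H^\delta((X,A),(Y,B))\ge a+\max\{1-a,0\}\ge 1$; thus $d_{\GH}((X,A),(Y,B))\ge 1>\tfrac14$. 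What is actually true is the two-sided estimate $\tfrac12\inf_R\dis(R)\le d_{\GH}((X,A),(Y,B))\le 2\inf_R\dis(R)$, the upper bound coming from the gluing with shift $\eta=\tfrac12\overline{\dis}(R)$, and the example above shows the ratio genuinely varies, so no constant yields an equality. You should also know that the paper's own proof of this direction fails at the same spot: it glues with shift $\tfrac r2=\tfrac14\dis(R)$ and justifies the mixed triangle inequality by step $(\ast\ast)$, the assertion that $d_X(x,x')+2r\le d_Y(y,y')$ for all $(x,y),(x',y')\in R$, which is self-contradictory (take $(x,y)=(x',y')$). So the obstacle you flagged is real: it is precisely where both your sketch and the paper's argument break down, and with the stated constants it cannot be repaired.
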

\begin{proof}
    First we prove that for any $r>d_{\GH} ((X,A),(Y,B))$, there is a pair correspondence $R$ with $\dis(R)<2r$. We define 
    \[
    R:=\left\{ (x,y)\colon x\in X,\, y\in Y,\, \delta(x,y)<r  \right\}
    \]
    where $\delta$ is an admissible metric on $X\sqcup Y$. This is a correspondence since $\delta(X,Y)+\delta(A,B)<r$. If $(x,y)\in R$ and $(x',y')\in R$, then
    \[
    \delta(x,x')\leq \delta(x,y)+\delta(y,y')+\delta(y',x').
    \]
    Such inequality implies 
    \[
    \left| \delta(x,x')-\delta(y,y')\right|\leq \delta(x,y)+\delta(x',y')<2r.
    \]
    Analogously we get 
    \[
    \left| \delta(a,a')-\delta(b,b')\right|\leq \delta(a,b)+\delta(a',b')<2r.
    \]
    Thus, $\dis(R)<2r$.

    Now, we will prove that 
    \[
    d_{\GH} ((X,A),(Y,B))\leq\frac{1}{2}\inf_{R}\dis(R)
    \]
    Let us take an                                         $r$ such that $\dis(R)=2r$, and define $\delta\colon\left( X\bigsqcup Y\right)\times\left( X\bigsqcup Y\right)\to \RR$ an admissible semimetric  by setting
    \[\delta(y,x):=\delta(x,y):=
    \begin{cases}
        d_X(x,y)&\text{if }x\in X,\, y\in X,\\
        d_Y(x,y)&\text{if }x\in Y,\, y\in Y,\\
         \frac{r}{2}+ \inf\left\{ d_X(x,x')+d_Y(y,y'): (x',y')\in R\right\}.
    \end{cases}
\]

Clearly, this is positive and symmetric. To verify the triangle inequality we have two cases.
\begin{enumerate}
    \item $x,y\in X$ and $z\in Y$.
    \begin{align*}
    \delta(x,z)+\delta(z,y)&=\frac{r}{2}+\inf\left\{ d_{X}(x,x')+d_{Y}(z',z):(x',z')\in R \right\}\\
    & +\frac{r}{2}+\inf\left\{ d_{X}(y,y')+d_{Y}(z'',z):(y',z'')\in R \right\}\\
    &= r+\inf\left\{ \begin{array}{c}d_{X}(x,x')+d_{Y}(z',z)\\+d_{X}(y,y')
    +d_{Y}(z'',z)\end{array}:(x',z'),(y',z'')\in R \right\}\\
    &\stackrel{(*)}{\geq} r+\inf\left\{ d_{X}(x,x')+d_{X}(y,y')+d_{Y}(z',z''):(x',z'),(y',z'')\in R \right\}\\
    &\stackrel{(**)}{\geq} 3r+\inf\left\{ d_{X}(x,x')+d_{X}(y,y')+d_{X}(x',y'):(x',z'),(y',z'')\in R \right\}\\
    &\stackrel{(*)}{\geq}3r+\inf\left\{ d_{X}(x,y):(x',z'),(y',z'')\in R \right\}\\
    &\geq d_{X}(x,y)=\delta(x,y).
    \end{align*}
    \item $x,z\in X$ and $y\in Y$.
    \begin{align*}
        \delta(x,z)+\delta(z,y)&=d_{X}(x,z)+\frac{r}{2}+\inf\left\{ d_{X}(z,z')+d_{Y}(y,y'):(z',y')\in R \right\}\\
        &=\frac{r}{2}+\inf\left\{ d_{X}(x,z)+d_{X}(z,z')+d_{Y}(y,y'):(z',y')\in R \right\}\\
        &\stackrel{(*)}{\geq}\frac{r}{2}+\inf\left\{ d_{X}(x,z')+d_{Y}(y,y'):(z',y')\in R \right\}\\
        &=\delta(x,y).
    \end{align*}
\end{enumerate}
    Inequality ($\ast$) is satisfied because of the triangle inequality and
    ($\ast\ast$) is satisfied because $2r=\dis(R)$  and 
    \[
    \alpha:=\sup_{(x,y),(x',y')\in R}\left\{ \left|d_{X}(x,x')-d_{Y}(y,y')\right|\right\} \geq \sup_{(a,b),(a',b')\in R|_{A\times B}}\left\{ \left|d_{X}(a,a')-d_{Y}(b,b')\right|\right\}=:\beta
    \]
    imply that $\alpha\geq 2r$ and $\beta\leq 2r$. Thus, $d_{X}(x,x')+2r\leq d_{Y}(y,y')$ for every $(x,y),(x',y')\in R$.

    Now we have to calculate that $d^\delta_{\H} ((X,A),(Y,B))\leq r$. Since for every $x\in X$ there exists $y\in Y$ such that $(x,y)\in R$, we get that $\delta(x,Y)\leq \frac{r}{2}$ for every $x\in X$ by the definition of $\delta$. Analogously, $\delta(y,X)\leq \frac{r}{2}$, $\delta(a,B)\leq \frac{r}{2}$ and $\delta(b,A)\leq\frac{r}{2}$ for every $y\in Y$, $a\in A$ and $b\in B$. Thus
    \[
     d_{\H}^{\delta}((X,A),(Y,B))\leq r.\qedhere
    \]
\end{proof}
\begin{rema}
    We can also get a reformulation of the metric $k$-tuple distance between $(X,X_k,\dots,X_1)$ and $(Y,Y_k,\dots,Y_1)$ in terms of correspondences by defining the distortion as
    \begin{align*}
        \dis(R)&=\frac{1}{k+1}\left(\sup\left\{ \left|d_{X}(x,x')-d_{Y}(y,y')\right|\colon (x,y),(x',y')\in R\right\} \right.\\&  +\sum_{i=1}^{k}\left.\sup\left\{ \left|d_{X}(a_i,a'_i)-d_{Y}(b_i,b'_i)\right|\colon (a_i,b_i),(a'_i,b'_i)\in R|_{A_i\times B_i}\right\} \right),
    \end{align*}
    where $R$ is a \textit{tuple correspondence} defined in analogy to definition \ref{def:pair_correspondence}.
    
\end{rema}

\section{Geodesics, separability, and Arzel\`a--Ascoli}\label{s:geodesics, separability and arzela-ascoli}
In this section we prove that the space of compact metric pairs $(\GH_1,d_{\GH})$ is geodesic and separable. We also prove a version of the classical Arzel\`a--Ascoli theorem for relative maps between metric pairs.

\subsection{Geodesicity}
\mbox{}%

\vspace{5pt}
\begin{definition}
    If $(X,d_X)$ and $(Y,d_Y)$ are compact metric spaces, we define the \textit{product space of $X$ and $Y$} as the Cartesian product $X\times Y$ with the metric 
    \[
    \dd((x,y),(x',y'))=\max\left\{ d_X(x,x'),d_Y(y,y')\right\}.
    \]
    Recall that the set of closed subsets of $X\times Y$, $H(X\times Y)$, is a compact metric space using Blaschke's theorem with $d_{\H}^{\dd}$, the Hausdorff metric associated with $\dd$. 
\end{definition}

\begin{lemma}[Extension of Lemma 2.1 in \cite{chowmemo}]\label{lema2.1}
    Let $R$ and $S$ be non-empty pair correspondences between $(X,A)$ and $(Y,B)$. Then
    \[
    \left|\dis(R)-\dis(S) \right|\leq d_{\H}^{\dd}(R,S)
    \]
\end{lemma}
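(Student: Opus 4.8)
The plan is to read the pair distortion as the average of two ordinary distortions and then apply the classical lemma 2.1 of \cite{chowmemo} to each of them. Writing $\overline{\dis}$ for the usual distortion of a correspondence, the definition gives $2\dis(R)=\overline{\dis}(R)+\overline{\dis}(R|_{A\times B})$, where the first term is the distortion of $R$ viewed as a correspondence between $X$ and $Y$ and the second is the distortion of $R|_{A\times B}$ viewed as a correspondence between $A$ and $B$. Since the inequality to be proved is symmetric in $R$ and $S$, it suffices to bound $\dis(R)-\dis(S)$ from above, and by the previous identity this splits as $\tfrac12$ of the sum of $\overline{\dis}(R)-\overline{\dis}(S)$ and $\overline{\dis}(R|_{A\times B})-\overline{\dis}(S|_{A\times B})$.

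The first summand is handled directly by lemma 2.1 of \cite{chowmemo}, applied to the correspondences $R$ and $S$ between $X$ and $Y$, yielding $|\overline{\dis}(R)-\overline{\dis}(S)|\le d_\H^\dd(R,S)$. The second summand is treated in the same way, applying the same lemma to the correspondences $R|_{A\times B}$ and $S|_{A\times B}$ between the compact metric spaces $A$ and $B$, which produces $|\overline{\dis}(R|_{A\times B})-\overline{\dis}(S|_{A\times B})|\le d_\H^\dd(R|_{A\times B},S|_{A\times B})$. Concretely, each of these two applications is the familiar argument in which one replaces a near-maximizing pair in one correspondence by an $\dd$-close pair in the other and uses the triangle inequality in each factor; I would either invoke the cited lemma as a black box or reproduce this short estimate inline.

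The main obstacle is then to compare the Hausdorff distance of the restrictions with that of the full correspondences, i.e.\ to relate $d_\H^\dd(R|_{A\times B},S|_{A\times B})$ to $d_\H^\dd(R,S)$ so that both summands are controlled by the same quantity. This is the step I expect to require the most care, because global $\dd$-proximity of $R$ and $S$ only provides approximants inside $S$, which need not lie in $A\times B$; the resolution should exploit that $R|_{A\times B}$ and $S|_{A\times B}$ are themselves correspondences between $A$ and $B$, so that a point of one restriction can be matched to a point of the other sharing a coordinate, after which the global Hausdorff bound is invoked. Once this comparison is in place, adding the two summands and dividing by $2$ gives $\dis(R)-\dis(S)\le d_\H^\dd(R,S)$, and symmetrizing in $R$ and $S$ recovers the absolute value. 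The delicate bookkeeping will be to ensure that the constants coming from the two invocations of lemma 2.1 and from the restriction comparison combine to exactly the coefficient $1$ claimed in the statement.
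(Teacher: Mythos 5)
Your reduction $2\dis(R)=\overline{\dis}(R)+\overline{\dis}(R|_{A\times B})$ and the application of the classical lemma to $R$ and $S$ viewed as correspondences between $X$ and $Y$ are sound, modulo the constant: with the max metric $\dd$ and the un-normalized $\overline{\dis}$ used here, the classical argument only gives $|\overline{\dis}(R)-\overline{\dis}(S)|\le 4\,d_\H^{\dd}(R,S)$, so the coefficient $1$ you hoped to recover is already out of reach at this stage. The fatal problem, however, is exactly the step you single out: there is no way to control $d_\H^{\dd}(R|_{A\times B},S|_{A\times B})$ by $d_\H^{\dd}(R,S)$, with any constant. An approximant in $S$ of a point of $R|_{A\times B}$ need not lie in $A\times B$, and matching points of the two restrictions ``sharing a coordinate'' produces pairs with no smallness property whatsoever. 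Concretely, for $0<\eps<1$ take $X=\{0,\eps,1\}\subset\RR$, $A=\{0,1\}$, $Y=B=\{0,1\}\subset\RR$, and
\[
R=\{(0,0),(\eps,1),(1,1)\},\qquad S=R\cup\{(0,1)\}.
\]
Both are pair correspondences, and $d_\H^{\dd}(R,S)=\eps$ because $(0,1)$ lies at $\dd$-distance $\eps$ from $(\eps,1)\in R$. But $R|_{A\times B}=\{(0,0),(1,1)\}$ and $S|_{A\times B}=\{(0,0),(1,1),(0,1)\}$, so $d_\H^{\dd}(R|_{A\times B},S|_{A\times B})=1$; moreover $\dis(R)=\tfrac{1}{2}(1-\eps)$ while $\dis(S)=1$, hence $\left|\dis(R)-\dis(S)\right|=\tfrac{1}{2}(1+\eps)$, which is not bounded by any fixed multiple of $d_\H^{\dd}(R,S)=\eps$.

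The same example shows that the lemma itself is false as stated, so the gap in your argument cannot be closed by any strategy; this is a defect of the statement, not of your decomposition. (The paper's own proof stumbles on the identical point: after passing to a supremum over unconstrained quadruples, it treats $\dd((a,b),(c,e))$, with $(a,b)\in R|_{A\times B}$ and $(c,e)\in S|_{A\times B}$, as if it were at most $\eps$, which is precisely what fails; and in any case it only reaches the bound $4\eta$, i.e.\ the constant $4$ that is later invoked in \eqref{eq1}.) What your two applications of the classical lemma do prove, and what the application in proposition \ref{prop_exi_opt_corr} actually needs, is the corrected statement in which the right-hand side is the Hausdorff distance of the \emph{pairs}:
\[
\left|\dis(R)-\dis(S)\right|\le 2\left(d_\H^{\dd}(R,S)+d_\H^{\dd}(R|_{A\times B},S|_{A\times B})\right).
\]
With this version, proposition \ref{prop_exi_opt_corr} can be repaired by using Blaschke's theorem to extract a subsequence along which both $R_n$ and $R_n|_{A_n\times B_n}$ converge in $H(X\times Y)$. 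So: keep your decomposition, drop the restriction-comparison step, and prove (and use) the pair version instead.
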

\begin{proof}
    Let $\eta> d_{\H}^{\dd}(R,S)$ and $\varepsilon\in (d_{\H}^{\dd}(R,S),\eta)$. Now, we calculate
    \begin{align*}
        \left|\dis(R)-\dis(S) \right|&=\left| \frac{1}{2} \left(\sup_{(x,y),(x',y')\in R}\left|d_X(x,x')-d_Y(y,y') \right|\right.\right.\\
        & \left.+\sup_{(a,b),(a',b')\in R|_{A\times B}}\left|d_X(a,a')-d_Y(b,b') \right|\right)\\
        & -\frac{1}{2}\left( \sup_{(u,v),(u',v')\in S}\left|d_X(u,u')-d_Y(v,v') \right|\right.\\
        &  \left.\left.+\sup_{(c,e),(c',e')\in S|_{A\times B}}\left|d_X(c,c')-d_Y(e,e') \right|\right)\right|\\
        &\leq \frac{1}{2}\sup\limits_{\substack{(x,y),(x',y')\in R\\ (a,b),(a',b')\in R|_{A\times B}\\ (u,v),(u',v')\in S\\ (c,e),(c',e')\in S|_{A\times B}}}\left|\left|d_X(x,x')-d_Y(y,y') \right|+\left|d_X(a,a')-d_Y(b,b') \right|\right.\\
        & \hspace{3cm}\left.- \left|d_X(u,u')-d_Y(v,v') \right|-\left|d_X(c,c')-d_Y(e,e') \right|\right|\\
        &\stackrel{(***)}{\leq} \frac{1}{2}\sup\limits_{\substack{(x,y),(x',y')\in R\\ (a,b),(a',b')\in R|_{A\times B}\\ (u,v),(u',v')\in S\\ (c,e),(c',e')\in S|_{A\times B}}}\left|d_X(x,x')-d_X(u,u') \right|+\left|d_Y(v,v')-d_Y(y,y') \right|\\
        & \hspace{3cm}+ \left|d_X(a,a')-d_X(c,c') \right|+\left|d_Y(e,e')-d_Y(b,b') \right|\\
        &\stackrel{(*)}{\leq} \frac{1}{2} \sup\limits_{\substack{(x,y),(x',y')\in R\\ (a,b),(a',b')\in R|_{A\times B}\\ (u,v),(u',v')\in S\\ (c,e),(c',e')\in S|_{A\times B}}} d_X(x,u)+d_X(x',u')+d_Y(v,y)+d_Y(v',y')\\
        & \hspace{3cm}+ d_X(a,c)+d_X(a',c')+d_Y(b,e)+d_Y(b',e')
        \end{align*}
        \begin{align*}
        &\leq \frac{1}{2} \sup\limits_{\substack{(x,y),(x',y')\in R\\ (a,b),(a',b')\in R|_{A\times B}\\ (u,v),(u',v')\in S\\ (c,e),(c',e')\in S|_{A\times B}}} 2\dd((x,y),(u,v))+2\dd((x',y'),(u',v'))\\
        & \hspace{3cm} +2\dd((a,b),(c,e))+2\dd((a',b'),(c',e'))\\
        &\leq\frac{1}{2}(8\varepsilon)=4\varepsilon<4\eta.
    \end{align*}
    Since $\eta$ was arbitrary, we get the result.
\end{proof}

\begin{definition}\label{def:optimal correspondence}
    We say that a pair correspondence between $(X,A)$ and $(Y,B)$ is \textit{optimal} if 
    \[
    d_{\GH} ((X,A),(Y,B))=\frac{1}{2}\dis(R).
    \]
    We denote by $\RRR^{\opt}((X,A),(Y,B))$ the set of all closed optimal pair correspondences.
\end{definition}

\begin{proposition}\label{prop_exi_opt_corr}
    $\RRR^{\opt}((X,A),(Y,B))\neq\emptyset$ for any compact metric pairs $(X,A)$ and $(Y,B)$.
\end{proposition}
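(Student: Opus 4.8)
The plan is to realize an optimal correspondence as a limit of near-optimal ones, using the compactness of the hyperspace $H(X\times Y)$ together with the continuity of $\dis$ established in Lemma \ref{lema2.1}. First I would fix compact metric pairs $(X,A)$ and $(Y,B)$ and, using Theorem \ref{teo1}, choose a sequence of closed pair correspondences $R_n\subset X\times Y$ with $\tfrac{1}{2}\dis(R_n)\to d_{\GH}((X,A),(Y,B))$. (One may take each $R_n$ closed, since the closure of a pair correspondence is again a pair correspondence and taking closures does not increase the distortion, as $\dis$ is defined via suprema of continuous functions of the points.) The goal is to extract a convergent subsequence whose limit is a pair correspondence of minimal distortion.

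The key compactness input is that $(X\times Y,\dd)$ is a compact metric space, so by Blaschke's theorem $(H(X\times Y),d_{\H}^{\dd})$ is itself compact. Hence the sequence $\{R_n\}$ admits a subsequence, still denoted $R_n$, converging in the Hausdorff metric to some closed set $R_\infty\in H(X\times Y)$. By Lemma \ref{lema2.1}, the map $S\mapsto\dis(S)$ is $1$-Lipschitz with respect to $d_{\H}^{\dd}$, so $\dis(R_n)\to\dis(R_\infty)$, which forces $\tfrac{1}{2}\dis(R_\infty)=d_{\GH}((X,A),(Y,B))$. It therefore only remains to check that $R_\infty$ is genuinely a pair correspondence, i.e.\ that it projects onto all of $X$ and all of $Y$, and that $R_\infty\cap(A\times B)$ projects onto all of $A$ and all of $B$.

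I expect this last verification to be the main obstacle, and the place where Lemma \ref{lema2.1} alone is not enough. The surjectivity onto $X$ and $Y$ is comparatively routine: given $x\in X$, pick $y_n$ with $(x,y_n)\in R_n$; Hausdorff convergence gives points of $R_\infty$ within $d_{\H}^{\dd}(R_n,R_\infty)$ of $(x,y_n)$, and compactness of $Y$ lets one pass to a limit $(x,y_\infty)\in R_\infty$ (using that $R_\infty$ is closed). The delicate point is the \emph{relative} surjectivity: one must ensure that for $a\in A$ the limiting partner $b_\infty$ actually lies in $B$, not merely in $Y$. This is where the definition of the pair-distortion is essential, since its second term controls $R_n|_{A\times B}$; the clean way to handle it is to argue separately on the subspace pairs $A,B$. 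Concretely, I would observe that $R_n|_{A\times B}$ is a correspondence between the compact spaces $A$ and $B$, so by Blaschke applied to $H(A\times B)$ one may (after passing to a further subsequence) assume $R_n|_{A\times B}$ converges to a closed correspondence $T_\infty\subset A\times B$; then one checks $T_\infty\subset R_\infty\cap(A\times B)$ and that $T_\infty$ is a genuine correspondence between $A$ and $B$, so that $R_\infty$ satisfies the relative conditions of Definition \ref{def:pair_correspondence}. Combining this with the distortion computation yields $R_\infty\in\RRR^{\opt}((X,A),(Y,B))$, proving the set is non-empty.
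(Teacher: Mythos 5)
Your plan follows the same skeleton as the paper's proof: a minimizing sequence of closed pair correspondences, Blaschke compactness of $H(X\times Y)$, Lemma \ref{lema2.1} to pass the distortion to the limit, and a final verification that the limit is a pair correspondence. (The paper produces its sequence differently, taking optimal correspondences between finite $\varepsilon_n$-nets of the two pairs, which are closed simply because they are finite, and it checks the correspondence property of the limit via projection estimates; your further subsequence in $H(A\times B)$ handles the relative surjectivity at least as carefully.) The genuine gap is your parenthetical claim that passing to closures does not increase the distortion. For pair correspondences this is false: the first supremum in $\dis$ is indeed unchanged under closure, but the second is taken over $\bar R\cap(A\times B)$, which may strictly contain $\overline{R\cap(A\times B)}$, because points of $R$ lying outside $A\times B$ can accumulate on $A\times B$, and the new pairs so created can raise the restricted supremum. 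Concretely, let $X=\{0,10\}\cup\{1/n\colon n\geq 1\}$, $A=\{0,10\}$, $Y=\{0,5,10\}\cup\{5-1/n\colon n\geq 1\}$, $B=\{0,5,10\}$, and
\[
R=\{(0,0),(10,10),(10,5)\}\cup\{(1/n,5-1/n)\colon n\geq 1\}.
\]
This is a pair correspondence with $R|_{A\times B}=\{(0,0),(10,10),(10,5)\}$, so its restricted supremum equals $5$ while its ambient supremum equals $\sup_n(10-2/n)=10$, giving $\dis(R)=\tfrac{15}{2}$. Its closure adds exactly the point $(0,5)$, and the pair $(0,5),(10,5)\in\bar R\cap(A\times B)$ gives $\left|d_X(0,10)-d_Y(5,5)\right|=10$, so $\dis(\bar R)=10>\dis(R)$.

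The step is repairable, but not by closures. Either follow the paper and use finite nets, or note that the first half of the proof of Theorem \ref{teo1} already yields closed near-optimal pair correspondences: for $r>d_{\GH}((X,A),(Y,B))$ choose an admissible metric $\delta$ on $X\sqcup Y$ with $d_\H^\delta((X,A),(Y,B))<r$ and set $R_r=\{(x,y)\colon \delta(x,y)\leq r\}$; this set is closed (since $\delta$ restricts to $d_X$ and $d_Y$, it is continuous on $X\times Y$) and is a pair correspondence with $\dis(R_r)\leq 2r$. With that replacement, the rest of your argument goes through at the same level of rigor as the paper's. Be aware, though, that the same closure phenomenon is why your $T_\infty$ only satisfies $T_\infty\subset R_\infty\cap(A\times B)$, possibly strictly: this inclusion suffices to make $R_\infty$ a pair correspondence, but the equality $\dis(R_\infty)=\lim_n\dis(R_n)$ then rests entirely on Lemma \ref{lema2.1} applied to the limit --- which is exactly the same dependence the paper's own proof has.
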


\begin{proof}
    Let us take a sequence of non-negative numbers $\{\varepsilon_n\}_{n\in\NN}$ which converges to $0$. Now, for each $n$, let $A_n$ and $B_n$ be $\varepsilon_n /2$-nets for $A$ and $B$, respectively. We extend them to obtain $X_n$ and $Y_n$, $\varepsilon/2$-nets for $X$ and $Y$. By the definition of $\varepsilon$-net, we have that $d_{\H}^{\delta}((X_n,A_n),(X,A))\leq \varepsilon$ and $d_{\H}^{\delta}((Y_n,B_n),(Y,B))\leq \varepsilon$ for every admissible metric $\delta$. It implies that $(X_n,A_n)\to(X,A)$ and $(Y_n,B_n)\to(Y,B)$.

    Optimal pair correspondences always exist for finite spaces since we construct them, first for the closed subsets and then for the whole spaces. Then, for each $n$, let be $R_n\in \RRR((X_n,A_n),(Y_n,B_n)) $ such that $\dis(R_n)=2 d_{\GH}((X_n,A_n),(Y_n,B_n))$. The sequence $\{R_n\}_{n\in\NN}\subset H(X\times Y)$ has a convergent subsequence which we denote in the same way to avoid problems with indices. Let $R$ be the limit of such sequence. It means that
    \[
    \lim_{n\to \infty}d_{\H}^{\dd}(R_n,R)=0.
    \]

    Then, by Lemma \ref{lema2.1},
    \begin{equation}\label{eq1}
    \left|\dis(R_n)-\dis(R)\right|\leq4 d_{\H}^{\dd}(R_n,R)
    \end{equation}

    Using the triangle inequality we get
    \[
    \left| d_{\GH}((X_n,A_n),(Y_n,B_n))-d_{\GH}((X,A),(Y,B))\right|\leq d_{\GH}((X_n,A_n),(X,A))+d_{\GH}((Y_n,B_n),(Y,B))
    \]
    and then
    \begin{equation}\label{eq2}
        \dis(R_n)=2d_{\GH}((X_n,A_n),(Y_n,B_n))\to 2d_{\GH}((X,A),(Y,B))
    \end{equation}
    as $n\to\infty$. Using (\ref{eq1}) and (\ref{eq2}) we obtain $\dis(R)= 2 d_{\GH}((X,A),(Y,B))$.

    Finally we have to prove that $R$ is actually a pair correspondence. For every $n\in\NN$, using \cite[Lemma 2.1]{chowmemo} we get
    \begin{alignat*}{2}
    &d_{\H}^{d_X}(X,\pi_1(R))&&\leq d_{\H}^{d_X}(X,\pi_1(R_n))+d_{\H}^{d_X}(\pi_1(R_n),\pi_1(R))\leq d_{\H}^{d_X}(X,X_n)+d_{\H}^{\dd}(R_n,R),\\
    &d_{\H}^{d_X}(A,\pi_1(R|_{A\times B}))&&\leq d_{\H}^{d_X}(A,\pi_1(R_n|_{A_n\times B_n}))+d_{\H}^{d_X}(\pi_1(R_n|_{A_n\times B_n}),\pi_1(R|_{A\times B}))\\&\mbox{}&&\leq d_{\H}^{d_X}(A,A_n)+d_{\H}^{\dd}(R_n|_{A_n\times B_n},R|_{A\times B}),\\
    &d_{\H}^{d_Y}(Y,\pi_2(R))&&\leq d_{\H}^{d_Y}(Y,\pi_2(R_n))+d_{\H}^{d_Y}(\pi_2(R_n),\pi_2(R))\leq d_{\H}^{d_Y}(Y,Y_n)+d_{\H}^{\dd}(R_n,R)
    \end{alignat*}
    
    and
    \begin{align*}
    d_{\H}^{d_Y}(B,\pi_2(R|_{A\times B}))&\leq d_{\H}^{d_Y}(B,\pi_2(R_n|_{A_n\times B_n}))+d_{\H}^{d_Y}(\pi_2(R_n|_{A_n\times B_n}),\pi_2(R|_{A\times B}))\\&\leq d_{\H}^{d_Y}(B,B_n)+d_{\H}^{\dd}(R_n|_{A_n\times B_n},R|_{A\times B}),
    \end{align*}
    where $\pi_1$ and $\pi_2$ are the natural projections of $X\times Y$ on $X$ and $Y$, respectively. Since the right terms can be arbitrary small, all the left terms are zero. Thus, $X=\bar{\pi}_1(R)$, $A=\bar{\pi}_1(R|_{A\times B})$, $Y=\bar{\pi}_2(R)$ and $B=\bar{\pi}_2(R|_{A\times B})$. Since $R$ is compact, the sets $\pi_1(R)$, $\pi_1(R|_{A\times B})$, $\pi_2(R)$ and $\pi_2(R|_{A\times B})$ are closed. Therefore, $\pi_1(R)=X$, $\pi_2(R)=Y$, $\pi_1(R|_{A\times Y})=A$ and $\pi_2(R|_{A\times B})=B$, which means that $R$ is a pair correspondence.
    \end{proof}

    \begin{theorem}{\label{thm:geodesic}}
        The metric space $(\GH_1,d_{\GH})$ is geodesic. Moreover, let $(X,A),(Y,B)\in \GH_1$, $R\in \RRR^{\opt}((X,A),(Y,B))$, and a curve $\gamma_R:[0,1]\to \GH_1$ between $(X,A)$ and $(Y,B)$ such that
        \[
        \gamma_R(0):= (X,A),\quad \gamma_R(1):=(Y,B)\quad\text{and}\quad \gamma_R(t):=(R,R|_{A\times B}, d_{\gamma_R(t)})\quad\text{for}\quad t\in(0,1),
        \]
        where for each $(x,y),(x',y')\in R$ and $t\in (0,1)$, the metric $d_{\gamma_R(t)}$ is given by
        \[
        d_{\gamma_R(t)}((x,y),(x',y')):=(1-t)\,d_X(x,x')+t\,d_Y(y,y').
        \]Then, $\gamma_R$ is a geodesic.
    \end{theorem}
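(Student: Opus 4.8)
The plan is to show that $\gamma_R$ realizes the distance $D := d_{\GH}((X,A),(Y,B))$ proportionally to the parameter, i.e. that
\[
d_{\GH}(\gamma_R(s),\gamma_R(t)) = |s-t|\,D \qquad \text{for all } s,t\in[0,1],
\]
which is exactly the assertion that $\gamma_R$ is a constant-speed geodesic. Since $\RRR^{\opt}((X,A),(Y,B))\neq\emptyset$ by Proposition \ref{prop_exi_opt_corr}, proving this for an arbitrary optimal $R$ simultaneously yields that the whole space $(\GH_1,d_{\GH})$ is geodesic.

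First I would check that each $\gamma_R(t)$, $t\in(0,1)$, is a genuine compact metric pair. Writing $p=(x,y)$ and $p'=(x',y')$, the function $d_{\gamma_R(t)}(p,p')=(1-t)\,d_X(x,x')+t\,d_Y(y,y')$ is a positive combination of the two pseudometrics $d_X\circ(\pi_1\times\pi_1)$ and $d_Y\circ(\pi_2\times\pi_2)$, hence symmetric and subadditive; and since $t,1-t>0$ it vanishes only when $x=x'$ and $y=y'$, i.e. when $p=p'$, so it is a metric. As $R$ is closed in the compact space $X\times Y$ it is compact, the topology induced by $d_{\gamma_R(t)}$ agrees with the subspace topology, and $R|_{A\times B}=R\cap(A\times B)$ is closed; thus $\gamma_R(t)\in\GH_1$.

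The core of the argument is a family of upper bounds coming from natural pair correspondences. Set $\alpha:=\sup_{(x,y),(x',y')\in R}|d_X(x,x')-d_Y(y,y')|$ and $\beta:=\sup_{(a,b),(a',b')\in R|_{A\times B}}|d_X(a,a')-d_Y(b,b')|$, so that $\dis(R)=\tfrac12(\alpha+\beta)$ and, by optimality, $D=\tfrac12\dis(R)$. For $s,t\in(0,1)$ the diagonal $\{(p,p):p\in R\}$ is a pair correspondence between $\gamma_R(s)$ and $\gamma_R(t)$, and since $d_{\gamma_R(s)}(p,p')-d_{\gamma_R(t)}(p,p')=(s-t)\bigl(d_Y(y,y')-d_X(x,x')\bigr)$ its two distortion terms equal $|s-t|\alpha$ and $|s-t|\beta$; hence $\dis(\mathrm{diag})=|s-t|\dis(R)$, and Theorem \ref{teo1} gives $d_{\GH}(\gamma_R(s),\gamma_R(t))\le\tfrac12\dis(\mathrm{diag})=|s-t|D$. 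The projection correspondences $\{(x,(x,y)):(x,y)\in R\}$ and $\{((x,y),y):(x,y)\in R\}$ handle the endpoints: a short computation gives distortions $t\,\dis(R)$ and $(1-s)\dis(R)$ respectively, so $d_{\GH}(\gamma_R(0),\gamma_R(t))\le tD$ and $d_{\GH}(\gamma_R(s),\gamma_R(1))\le(1-s)D$.

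Finally I would upgrade these upper bounds to equalities by the standard \enquote{no shortcut} argument. For $0\le s\le t\le 1$, the triangle inequality gives
\[
D=d_{\GH}(\gamma_R(0),\gamma_R(1))\le d_{\GH}(\gamma_R(0),\gamma_R(s))+d_{\GH}(\gamma_R(s),\gamma_R(t))+d_{\GH}(\gamma_R(t),\gamma_R(1))\le sD+(t-s)D+(1-t)D=D,
\]
forcing every inequality to be an equality; in particular $d_{\GH}(\gamma_R(s),\gamma_R(t))=|s-t|D$. The step I expect to require the most care is the bookkeeping in the distortion computations, namely verifying that each natural correspondence is genuinely a \emph{pair} correspondence (that its restriction over $A$ is a correspondence between the relevant distinguished subsets) and that both the full and the restricted distortion terms scale by the same factor, together with the essential use of the optimality of $R$: without it the bounds would read $|s-t|\tfrac12\dis(R)$ with $\tfrac12\dis(R)$ possibly exceeding $D$, and the forcing argument would collapse.
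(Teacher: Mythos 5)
Your proposal is correct and follows essentially the same route as the paper: the diagonal pair correspondence between $\gamma_R(s)$ and $\gamma_R(t)$ for interior parameters, the projection correspondences $\{(x,(x,y))\}$ and $\{((x,y),y)\}$ for the endpoints, and Theorem \ref{teo1} to convert the distortion computations into the upper bound $d_{\GH}(\gamma_R(s),\gamma_R(t))\le|s-t|\,D$. The only cosmetic differences are that you make explicit the ``no shortcut'' triangle-inequality argument that the paper delegates to \cite[Lemma 1.3]{chowmemo}, and that you verify $\gamma_R(t)\in\GH_1$, a point the paper leaves implicit; your distortion computation also avoids a sign slip present in the paper's displayed calculation.
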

    \begin{proof}
    We have to prove that $\gamma_R$ is a geodesic, i.e., for every $s,t\in[0,1]$ we have to check that
    \[
    d_{\GH}(\gamma_R(s),\gamma_R(t))=|s-t|\,d_{\GH}((X,A),(Y,B)).
    \]
    Using \cite[Lemma 1.3]{chowmemo}, it is sufficient to prove
    \[
    d_{\GH}(\gamma_R(s),\gamma_R(t))\leq|s-t|\,d_{\GH}((X,A),(Y,B)).
    \]
    We have three cases depending on whether $s$ or $t$ are the end points. If $s,t\in (0,1)$, let $\tri\in \RRR(\gamma_R(t),\gamma_R(s))$ be the diagonal pair correspondence. Then,
        \begin{align*}
            \dis(\tri)&=\frac{1}{2}\left( \sup_{(a,a),(b,b)\in\tri}\left| d_{\gamma_R(t)}(a,b)-d_{\gamma_R(s)}(a,b) \right|\right.\\
            & \hspace{5.5cm}\left.+\sup_{(c,c),(d,d)\in\tri|_{A\times B\times A\times B}}\left| d_{\gamma_R(t)}(c,d)-d_{\gamma_R(s)}(c,d) \right|\right)\\
            &=\frac{1}{2}\left( \sup_{(x,y),(x',y')\in R}\left| d_{\gamma_R(t)}((x,y),(x',y'))-d_{\gamma_R(s)}((x,y),(x',y')) \right|\right.\\
            & \hspace{2.5cm}\left.+\sup_{(z,w),(z',w')\in R|_{A\times B}}\left| d_{\gamma_R(t)}((z,w),(z',w'))-d_{\gamma_R(s)}((z,w),(z',w')) \right|\right)
            \end{align*}
            \begin{align*}
            &=\frac{1}{2}\left( \sup_{(x,y),(x',y')\in R}\left| (1-t)d_X(x,x')+td_Y(y,y')-(1-s)d_X(x,x')-sd_Y(y,y') \right|\right.\\
            & \left.+\sup_{(z,w),(z',w')\in R|_{A\times B}}\left| (1-t)d_X(z,z')+t d_Y(w,w')-(1-s)d_X(z,z')-sd_Y(w,w') \right|\right)\\
            &=\frac{1}{2}\left( \sup_{(x,y),(x',y')\in R}\left| (s-t)d_X(x,x')+(s-t)d_Y(y,y') \right|\right.\\
            & \hspace{4cm}\left.+\sup_{(z,w),(z',w')\in R|_{A\times B}}\left| (s-t)d_X(z,z')+(s-t) d_Y(w,w') \right|\right)\\
            &=\left|t-s\right|\,\frac{1}{2} \left( \sup_{(x,y),(x',y')\in R}\left| d_X(x,x')+d_Y(y,y') \right|\right.\\
            & \hspace{6cm}\left.+\sup_{(z,w),(z',w')\in R|_{A\times B}}\left| d_X(z,z')+ d_Y(w,w') \right|\right)\\
            &=\left|t-s\right|\,\dis(R)=\left|t-s\right|\,2\,d_{\GH}((X,A),(Y,B)).
            \end{align*}
            By theorem \ref{teo1}, $d_{\GH}(\gamma_R(t),\gamma_R(s))\leq\frac{1}{2}\dis(\tri)=|t-s|\,d_{\GH}((X,A),(Y,B))$.

    If $s=0$ and $t\in(0,1)$, we define $R_X:=\left\{ (x,(x,y)):(x,y)\in R\right\}$, which is a pair correspondence in $\RRR((X,A),\gamma_R(t))$. Therefore,
    \begin{align*}
            \dis(R_X)&=\frac{1}{2}\left( \sup_{(x,(x,y),(x'(x',y'))\in R_X}\left| d_X(x,x')-d_{\gamma_R(t)}((x,y),(x',y')) \right|\right.\\
            & \hspace{3.5cm}\left.+\sup_{(a,(a,b)),(a',(a',b'))\in R_X|_{A\times  A\times B}}\left| d_X(a,a')-d_{\gamma_R(t)}((a,b),(a',b')) \right|\right)\\
           &=\frac{1}{2}\left( \sup_{(x,(x,y),(x'(x',y'))\in R_X}\left| d_X(x,x')-(1-t)d_X(x,x')-t\,d_Y(y,y') \right|\right.\\
            & \hspace{2.3cm}\left.+\sup_{(a,(a,b)),(a',(a',b'))\in R_X|_{A\times  A\times B}}\left| d_X(a,a')-(1-t)d_X(a,a')-t\,d_Y(b,b') \right|\right)\\
            &=\frac{1}{2}\left( \sup_{(x,(x,y),(x'(x',y'))\in R_X}\left| t\,d_X(x,x')-t\,d_Y(y,y') \right|\right.\\
            & \hspace{4.9cm}\left.+\sup_{(a,(a,b)),(a',(a',b'))\in R_X|_{A\times  A\times B}}\left| t\,d_X(a,a')-t\,d_Y(b,b') \right|\right)\\
            &=\frac{t}{2}\left( \sup_{(x,(x,y),(x'(x',y'))\in R_X}\left| d_X(x,x')-d_Y(y,y') \right|\right.\\
            & \hspace{5.2cm}\left.+\sup_{(a,(a,b)),(a',(a',b'))\in R_X|_{A\times  A\times B}}\left| d_X(a,a')-d_Y(b,b') \right|\right)\\
            &=t\,\dis(R)=2t\,d_{\GH}((X,A),(Y,B)).
            \end{align*}
            Again, by theorem \ref{teo1}, we get $d_{\GH}((X,A),\gamma_R(t))\leq t\, d_{\GH}((X,A),(Y,B))$.

    The remaining case when $s\in (0,1)$ and $t=1$ is analogous and we obtain $d_{\GH}(\gamma_R(s),(Y,B))\leq |1-s|\, d_{\GH}((X,A),(Y,B)).$
    \end{proof}
\begin{rema}
    Similarly as in theorem \ref{thm:geodesic}, we can construct geodesics in $(\GH_k,d_{\GH})$, defining optimal tuple correspondences and obtaining analogous results to Lemma \ref{lema2.1} and proposition \ref{prop_exi_opt_corr}.
\end{rema}

\subsection{Separability}
\mbox{}%
\vspace{5pt}

\begin{theorem}{\label{thm:separable}}
    The metric space $(\GH_1,d_{\GH})$ is separable.
\end{theorem}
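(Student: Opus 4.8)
The plan is to mimic the classical proof that the Gromov--Hausdorff space of compact metric spaces is separable: exhibit an explicit countable dense family built from finite metric pairs whose distances are rational. The argument proceeds by three successive reductions, and the only subtle point is the rationalization step.

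First I would reduce to finite metric pairs. Given a compact metric pair $(X,A)$ and $\varepsilon>0$, choose a finite $\varepsilon$-net $A'$ of $A$ and extend it to a finite $\varepsilon$-net $X'\supseteq A'$ of $X$, exactly as in the proof of Proposition \ref{prop_exi_opt_corr}. Since $A'\subseteq A$ and $X'\subseteq X$ are $\varepsilon$-dense, the pair Hausdorff distance computed inside $(X,d_X)$ satisfies $d_\H^{d_X}((X',A'),(X,A))=d_\H^{d_X}(X',X)+d_\H^{d_X}(A',A)\le 2\varepsilon$, whence $d_{\GH}((X',A'),(X,A))\le 2\varepsilon$. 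Thus finite metric pairs are dense in $(\GH_1,d_{\GH})$.

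Second, I would reduce from finite metric pairs to those whose pairwise distances are all rational. Fix a finite metric pair $(X',A',d_{X'})$ with $X'=\{x_1,\dots,x_n\}$ and $A'\subseteq X'$, and perturb the metric while keeping the same underlying set and the same subset $A'$. The key observation is that the set of metrics on $\{1,\dots,n\}$ is a convex polyhedral cone in $\RR^{\binom{n}{2}}$ cut out by the triangle inequalities, which are linear with integer coefficients; hence its rational points are dense in it. Choosing a rational metric $d'$ with $\max_{i,j}\abs{d'(x_i,x_j)-d_{X'}(x_i,x_j)}<\varepsilon$ and applying Theorem \ref{teo1} to the diagonal correspondence $\tri=\{(x,x):x\in X'\}$ gives $\dis(\tri)=\tfrac12\bigl(\sup_{\tri}\abs{d_{X'}-d'}+\sup_{\tri|_{A'\times A'}}\abs{d_{X'}-d'}\bigr)\le\varepsilon$, so that $d_{\GH}((X',A',d_{X'}),(X',A',d'))\le\tfrac12\dis(\tri)\le\tfrac{\varepsilon}{2}$. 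Combined with the first step, finite rational metric pairs are dense.

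Finally, I would verify countability. An isometry class of a finite metric pair with rational distances is determined by an integer $n$, a subset $A\subseteq\{1,\dots,n\}$ (finitely many choices), and a symmetric rational distance matrix satisfying the triangle inequality (countably many choices). The union over all $n\in\NN$ is therefore countable, and by the first two steps it is dense; this countable dense set establishes separability. The main obstacle is precisely the rationalization in the second step: naively rounding each distance to a nearby rational can destroy the triangle inequality. The clean way around this is the polyhedral-cone viewpoint just described, which guarantees a genuine rational metric arbitrarily close to $d_{X'}$. One should also note that carrying the subset structure along is harmless, since $A'$ is finite and hence automatically closed, and the diagonal correspondence restricts to the diagonal of $A'\times A'$, so the pair distortion is controlled by the same uniform bound.
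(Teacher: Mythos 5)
Your proof is correct, but it follows a genuinely different route from the paper's. The paper does not redo the finite-approximation and rationalization steps at all: it black-boxes the classical fact that finite metric spaces with rational distances form a countable dense subset $\calX$ of the unpaired Gromov--Hausdorff space, picks $X'\in\calX$ with $d_{\GH}(X,X')<\eps$, extracts a plain correspondence $R\subset X\times X'$ with $\overline{\dis}(R)<2\eps$, and then \emph{transfers the subset through the correspondence}, setting $A'=\{a'\in X':(a,a')\in R\ \text{for some}\ a\in A\}$; since the pair distortion of $R$ is bounded by $\overline{\dis}(R)$, Theorem \ref{teo1} yields $d_{\GH}((X,A),(X',A'))<\eps$, and the countable dense family is $\{(X',A'): X'\in\calX,\ A'\subset X'\}$. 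You instead build the dense family from scratch: nets inside $X$ (a net of $A$ extended to a net of $X$, as in Proposition \ref{prop_exi_opt_corr}) reduce to finite subpairs, and then you perturb the metric to rational values, justified by density of rational points in the rational polyhedral cone of (pseudo)metrics on $n$ points. Both arguments are valid and land on essentially the same countable set. The paper's route is shorter and isolates the only pair-specific difficulty --- carrying the subset along --- at the cost of invoking the classical separability theorem as input; your route is self-contained, and your polyhedral-cone observation correctly handles the rounding pitfall (naive rounding can violate the triangle inequality), which is precisely the step hidden inside the classical result the paper cites. Two minor points in your write-up: the cone argument a priori produces a rational \emph{pseudo}metric, but any point closer to $d_{X'}$ than half the minimal positive distance is automatically a genuine metric, so your $d'$ exists; and the inequality $d_{\GH}((X',A'),(X,A))\leq d_{\H}^{d_X}((X',A'),(X,A))$ for subpairs of a common space uses the standard $\eta$-perturbation to get an admissible metric on the disjoint union, which is routine but worth a word.
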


\begin{proof}
Let $\calX_n$ be the set of finite metric spaces with $n$ elements and rational distances between their points, and set $\calX = \bigcup_{n\in \NN} \calX_n$. Clearly each $\calX_n$ is countable, therefore $\calX$ is countable. Moreover, $\calX$ is dense in $(\GH,d_{\GH})$. Now, if $(X,A)\in \GH_1$ and $\eps>0$, there is some $X'\in \calX$ such that $d_{\GH}(X,X')<\varepsilon$, which means there exists a correspondence $R\subset X\times X'$ such that
\[
\overline{\dis}(R) < 2\eps.
\] 
Now, let 
\[
A' = \{a'\in X':(a,a')\in R,\ \text{ for some }\ a\in A\}.
\]
We claim that $d_{\GH}((X,A),(X',A'))< \eps$. Indeed, by construction, $R$ is a pair correspondence between $(X,A)$ and $(X',A')$, and
\begin{align*}
\dis(R) &= \frac{1}{2}\left(\sup\left\{ \left|d_{X}(x,x')-d_{Y}(y,y')\right|\colon (x,y),(x',y')\in R\right\} \right.\\
&\phantom{asdfg}+\left.\sup\left\{ \left|d_{X}(a,a')-d_{Y}(b,b')\right|\colon (a,b),(a',b')\in R|_{A\times A'}\right\}\right)\\
&\leq \frac{1}{2}(2\overline{\dis}(R) )< 2\eps.
\end{align*}
The claim follows, and this implies that the set 
\[
\calX_{\GH_1}= \{(X,A)\in \GH_1: X\in \calX,\ A\subset X\}
\]
is dense in $\GH_1$, and it is countable since $\calX$ is countable and each $X\in \calX$ has finitely many subsets.
\end{proof}
\begin{rema}
    The space of compact metric $k$-tuples $(\GH_k,d_{\GH})$ is also separable, for any $k\in \NN$, by an analogous argument.
\end{rema}

\subsection{Arzelà--Ascoli theorem for $\GH_1$}
\mbox{}%

\vspace{5pt}
In this section we prove a version of the classical Arzel\`a--Ascoli theorem for metric pairs, generalizing the one presented in \cite{herron2016} for maps between pointed metric spaces.
\begin{definition}\label{def:relative map}
Let $(X,A)$ and $(Y,B)$ be metric pairs. A \textit{relative map} $f\colon (X,A) \to (Y,B)$ is a map $f\colon X\to Y$ such that $f(A)\subset B$. The \textit{Lipschitz constant} of $f$ is given by
\[
\Lip(f)=\inf\left\{K\in \RR\cup \{\infty\}\left|\frac{d_Y(f(x_1),f(x_2))}{d_X(x_1,x_2)}\leq K \text{ for every }x_1\neq x_2\right.\right\},
\]
and if $\Lip(f)<\infty$, then $f$ is a \textit{Lipschitz map}. 
\end{definition}

\begin{definition}\label{def:locally uniformly bounded}
A family of relative maps $\{f_i\colon (X_i,A_i)\to (Y_i,B_i)\}_{i\in I}$ is \textit{relatively uniformly bounded} if for any $R>0$ there exists $T>0$ such that for all $i\in I$, $f_i(\BB_R(A_i))\subset \BB_T(B_i)$.
\end{definition}

\begin{definition}\label{def: equicontinuous family of relative maps}
A family of relative maps $\{f_i\colon (X_i,A_i)\to (Y_i,B_i)\}_{i\in I}$ is \textit{relatively equicontinuous} if for any $R>0$ and $\eps>0$ there exist $\delta>0$ such that for all $i\in I$, and all $x,y\in \BB_R(A_i)$, $d_{X_i}(x,y)< \delta$ implies $d_{Y_i}(f_i(x),f_i(y))< \eps$.
\end{definition}


\begin{definition}\label{def:relative uniform convergence}
A sequence of relative maps $\{f_i\colon (X_i,A_i)\to (Y_i,B_i)\}_{i\in \NN}$ \textit{converges relatively uniformly} to a relative map $f_\infty\colon (X_\infty,A_\infty)\to (Y_\infty,B_\infty)$ if there exist pairs $(\calX,\calA)$ and $(\calY,\calB)$, and isometric embeddings $\varphi_i\colon (X_i,A_i)\to (\calX,\calA)$ and $\psi_i\colon (Y_i,B_i)\to (\calY,\calB)$, for $i\in \NN\cup \{\infty\}$, such that, for each $R>0$ and all $\eps>0$, there exists $N\in \NN$ and $\eta>0$ such that, for all $i\geq N$, all $x\in \BB_R(A_i)$, $x_\infty\in \BB_R(A_\infty)$, if $d(\varphi_i(x),\varphi_\infty(x_\infty)) < \eta$ then $d(\psi_i(f_i(x)),\psi_\infty(f_\infty(x_\infty))) < \eps$.
\end{definition}

\begin{theorem}{\label{thm:arzela}}
Consider metric pairs $(X_i,A_i)\toGH (X_\infty,A_\infty)$ and $(Y_i,B_i)\toGH (Y_\infty,B_\infty)$, where $A_i$ is compact for all $i\in \NN\cup \{\infty\}$, and relatively equicontinuous, relatively uniformly bounded maps $f_i\colon (X_i,A_i)\to (Y_i,B_i)$, for all $i\in \NN$. Then there exists a subsequence of $\{f_i\}_{i\in\NN}$ converging relatively uniformly to a continuous relative map $f_\infty\colon (X_\infty,A_\infty)\to (Y_\infty,B_\infty)$. Moreover, if $\Lip(f_i)\leq K$ for all $i\in \NN$ then $\Lip(f_\infty)\leq K$, and if $\Lip(f_i|_{A_i})\leq L$ for all $i\in \NN$ then $\Lip(f_\infty|_{A_\infty})\leq L$. 
\end{theorem}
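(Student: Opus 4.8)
The plan is to mimic the classical Arzelà--Ascoli argument, carried out inside a common pair of ambient spaces. First I would invoke the embedding theorem of \cite{aagmc}: since $(X_i,A_i)\toGH(X_\infty,A_\infty)$, there exist a metric pair $(\calX,\calA)$ and isometric embeddings $\varphi_i\colon(X_i,A_i)\to(\calX,\calA)$, $i\in\NN\cup\{\infty\}$, realizing the convergence, so that for each $R>0$ the neighborhoods $\BB_R(\varphi_i(A_i))$ converge to $\BB_R(\varphi_\infty(A_\infty))$ in the Hausdorff distance of $\calX$; similarly one obtains $(\calY,\calB)$ and $\psi_i\colon(Y_i,B_i)\to(\calY,\calB)$. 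These are exactly the embeddings appearing in Definition~\ref{def:relative uniform convergence}, and from now on I would identify each space with its image and work inside $\calX$ and $\calY$.

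Next comes the diagonal extraction. Using the compactness of $A_\infty$ together with the total boundedness of the neighborhoods $\BB_R(A_\infty)$ coming from the convergence, I would fix a countable set $D\subset X_\infty$ that is dense in $\BB_R(A_\infty)$ for every $R\in\NN$. For each $z\in D$ choose $x_i^z\in X_i$ with $\varphi_i(x_i^z)\to\varphi_\infty(z)$. By relative uniform boundedness (Definition~\ref{def:locally uniformly bounded}) the images $\psi_i(f_i(x_i^z))$ eventually lie in a fixed bounded neighborhood of $B_\infty$; since such neighborhoods are totally bounded, the sequence $\{\psi_i(f_i(x_i^z))\}_i$ is precompact in $\calY$. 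A diagonal argument over the countable set $D$ then produces a subsequence (not relabeled) along which $\psi_i(f_i(x_i^z))$ converges for every $z\in D$; I call its limit $\bar f(z)$, which lies in $Y_\infty$ because $Y_\infty$ is closed in $\calY$.

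It remains to build $f_\infty$ and verify the two conclusions. Relative equicontinuity (Definition~\ref{def: equicontinuous family of relative maps}) passes to the limit and shows that $\bar f$ is uniformly continuous on $D\cap\BB_R(A_\infty)$ for each $R$; hence it extends uniquely to a continuous map $f_\infty\colon X_\infty\to Y_\infty$, which is relative since for $a\in A_\infty$ one approximates $a$ by points of $A_i$ and uses $f_i(A_i)\subset B_i$ with $B_i\to B_\infty$ to get $f_\infty(a)\in B_\infty$. Relative uniform convergence in the sense of Definition~\ref{def:relative uniform convergence} I would then prove by a three-$\eps$ argument: given $R$ and $\eps$, pick $\delta$ from equicontinuity, cover $\BB_R(A_\infty)$ by finitely many $\delta$-balls centered at points of $D$, and combine the finitely many established convergences at those centers with the equicontinuity of the $f_i$ to control $\psi_i(f_i(x))$ against $\psi_\infty(f_\infty(x_\infty))$ whenever $\varphi_i(x)$ is $\eta$-close to $\varphi_\infty(x_\infty)$. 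The Lipschitz estimates follow by passing to the limit in $d_{Y_i}(f_i(x_i),f_i(x_i'))\le K\,d_{X_i}(x_i,x_i')$ along approximating points, using that the embeddings are isometric, and identically with $L$ for $f_\infty|_{A_\infty}$.

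I expect the crux to be the extraction step: guaranteeing the precompactness of the image sequences $\{\psi_i(f_i(x_i^z))\}_i$ that makes the diagonal argument run. This rests on deducing, from the convergence $(Y_i,B_i)\toGH(Y_\infty,B_\infty)$ together with relative uniform boundedness, that bounded neighborhoods of $B_\infty$ (and, eventually, of the $B_i$) are totally bounded. The other delicate point is the bookkeeping in the final step, namely matching the moving domain points $x\in\BB_R(A_i)$ with fixed points $x_\infty\in\BB_R(A_\infty)$ through the $\eta$-closeness condition of Definition~\ref{def:relative uniform convergence}, since here the domains themselves vary with $i$ and only agree in the limit.
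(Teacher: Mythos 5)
Your proposal is correct and takes essentially the same approach as the paper: it invokes \cite[Theorem 1.1]{aagmc} to embed everything into common ambient pairs $(\calX,\calA)$, $(\calY,\calB)$, then runs the classical diagonal/equicontinuity argument inside those ambient spaces, and finally obtains the Lipschitz bounds by passing to the limit along approximating sequences. The only difference is one of exposition: the paper delegates your middle step (the extraction, extension, and relative uniform convergence) to \cite[Proposition 5.1]{herron2016}, remarking only that compactness of the $A_i$ and properness of the $X_i$ make the neighborhoods $\BB_R(A_i)$ compact --- which is exactly the precompactness point you identify as the crux.
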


\begin{proof}
By \cite[Theorem 1.1]{aagmc} we know that there are metric pairs $(\calX,\calA)$, $(\calY,\calB)$ and isometric embeddings $\varphi_i \colon (X_i,A_i)\to (\calX,\calA)$ and $\psi_i \colon (Y_i,B_i)\to (\calY,\calB)$ such that 
\[
d_\H^\calX(\varphi_i(X_i),\varphi_\infty(X_\infty))\to 0
\]
and 
\[
d_\H^\calY(\psi_i(Y_i),\psi_\infty(Y_\infty))\to 0.
\]
The rest of the proof follows along the same lines as that of \cite[Proposition 5.1]{herron2016}. The only detail to take into account is that the compactness of $A_i$ and the properness of $X_i$ for $i\in \NN\cup \{\infty\}$ imply that the neighborhoods $\BB_R(A_i)$ ar compact for any $R>0$.

Finally, we obtain the last assertions from the proof of \cite[Proposition 5.1]{herron2016}. Namely, if $f_i$ (or their restrictions to $A_i$) are $L$-Lipschitz, let $u, v\in X_\infty$ (respectively, $u,v\in A_\infty$) and consider sequences $\{u_i\}_{i\in \NN}$ and $\{v_i\}_{i\in \NN}$ such that $u_i,v_i\in X_i$ (or $u_i,v_i\in A_i$, respectively), and $\varphi_i(u_i)\to u$, $\psi_i(v_i)\to v$. By the triangle inequality,
\begin{align*}
d(f_\infty(u),f_\infty(v)) &\leq d(\psi_\infty(f_\infty(u)),\psi_i(f_i(u_i)))+ d(f_i(u_i),f_i(v_i))+d(\psi_i(f_i(v_i)),\psi_\infty(f_\infty(v)))\\
&\leq  d(\psi_\infty(f_\infty(u)),\psi_i(f_i(u_i)))+ Ld(u_i,v_i)+d(\psi_i(f_i(v_i)),\psi_\infty(f_\infty(v))).
\end{align*}
By taking a subsequence of $\{f_i\}_{i\in \NN}$ pointwise convergent to $f_\infty$, we can let $i\to \infty$ and obtain that 
\[
d(f_\infty(u),f_\infty(v)) \leq L\ d(u,v).\qedhere
\]
\end{proof}
\begin{rema}
    If we take analogous tuple versions of definitions \ref{def:relative map}, \ref{def:locally uniformly bounded}, \ref{def: equicontinuous family of relative maps} and \ref{def:relative uniform convergence}, we get a tuple version of theorem \ref{thm:arzela}.
\end{rema}

\section{Approximation by surfaces}\label{s:cassorla}

In \cite{cassorla}, Cassorla proved that any compact length space can be approximated by Riemannian surfaces. In this section, we prove a generalization of that result but for metric  pairs.

\begin{theorem}\label{thm:cassorla}
    If $\calM$ is the set of compact length metric pairs and if $\calS$ is the subset of $\calM$ consisting of 2-dimensional Riemannian manifolds which can be isometrically embedded in $\RR^3$ together with a 2-dimensional submanifold with boundary, then $\bar{\calS}=\calM$ in $d_{\GH_1}$.
\end{theorem}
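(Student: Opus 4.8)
The plan is to prove the two inclusions $\bar{\calS}\subseteq\calM$ and $\calM\subseteq\bar{\calS}$ separately; the first is soft and the second carries all the content. For $\bar{\calS}\subseteq\calM$ I would first observe that $\calS\subseteq\calM$: a compact Riemannian $2$-manifold, endowed with its intrinsic Riemannian distance, is a compact length space, and a $2$-dimensional submanifold with boundary is in particular a closed subset, so each element of $\calS$ is a compact length metric pair. It then suffices to check that $\calM$ is closed in $(\GH_1,d_\GH)$. If $(X_i,A_i)\toGH(X,A)$ with each $X_i$ a compact length space, then $X$ is again a compact length space, by the classical fact that a Gromov--Hausdorff limit of compact length spaces is a length space (see \cite{buragoburagoivanov}); since the distinguished set $A$ is automatically closed, $(X,A)\in\calM$. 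Hence $\bar{\calS}\subseteq\bar{\calM}=\calM$.

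For the reverse inclusion $\calM\subseteq\bar{\calS}$, fix $(X,A)\in\calM$ and $\eps>0$, and produce $(M,N)\in\calS$ with $d_{\GH_1}((X,A),(M,N))<\eps$ by adapting Cassorla's construction so that the distinguished subset is tracked throughout. First I would choose a $\delta$-net of $A$ (with $\delta$ small relative to $\eps$) and extend it to a $\delta$-net $\{x_1,\dots,x_n\}$ of $X$, so that the net points lying in $A$ form their own $\delta$-net of $A$. Using that $X$ is a length space, I build a finite metric graph $G$ on these vertices, joining sufficiently close pairs by edges of length equal to the corresponding distances $d_X(x_i,x_j)$, so that the path metric of $G$ approximates $d_X$; let $G_A$ be the subgraph spanned by the vertices in $A$, with the subspace metric inherited from $G$. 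By Theorem \ref{teo1}, the identification of net points furnishes a pair correspondence witnessing $d_{\GH_1}((X,A),(G,G_A))<\eps/2$.

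Next I would apply Cassorla's thickening to the graph: replace each vertex by a small $2$-dimensional piece and each edge by a thin tube, producing a compact surface $M\subset\RR^3$ whose intrinsic metric approximates the graph metric of $G$ as closely as desired once the tubes are thin enough (this is precisely \cite{cassorla}). The key adaptation is to perform the thickening compatibly with the splitting of $G$ into $G_A$ and the rest: take $N$ to be the union of the caps and tubes arising from $G_A$. Then $N$ is a codimension-$0$, $2$-dimensional submanifold of $M$ with boundary, the boundary being the circles along which the $A$-tubes meet the remaining part of $M$. Since $N\subset M$ and the intrinsic metric of $M$ approximates that of $G$, the subspace metric $d_M|_N$ approximates the metric that $G_A$ inherits from $G$; this controls both the full distortion and the restricted distortion of Definition \ref{def:pair_correspondence} simultaneously, giving $d_{\GH_1}((G,G_A),(M,N))<\eps/2$. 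The triangle inequality then yields $d_{\GH_1}((X,A),(M,N))<\eps$, proving $\calM\subseteq\bar{\calS}$.

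The main obstacle is the simultaneous control required in the last step. I must arrange the thickening so that (i) geodesics in $M$ stay close to the cores of the tubes, so that $d_M$ genuinely approximates the graph metric and the tubes create no shortcuts, and (ii) the distinguished region $N$ is a bona fide submanifold with boundary whose induced subspace metric matches $d_X|_A$. Point (i) is Cassorla's core estimate and may be imported from \cite{cassorla}; point (ii) is the new ingredient, and it requires cutting each relevant tube along a circle transverse to its core so that $\partial N$ is a disjoint union of smooth circles and so that passing to $N$ does not distort the induced metric relative to the ambient one. Verifying that restricting attention to $N$ preserves the distortion bound, in the pair sense, is where I expect the delicate bookkeeping to lie.
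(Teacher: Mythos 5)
Your proposal follows essentially the same route as the paper's own proof: choose a net of $A$ and extend it to a net of $X$, build a metric graph pair (the paper's $1$-complexes $(L,K)$, your $(G,G_A)$) approximating $(X,A)$, thicken it à la Cassorla into a surface in $\RR^3$ while tracking the part coming from $A$ as a codimension-zero submanifold with boundary, and finish with the triangle inequality. The differences are cosmetic only: you additionally check the soft inclusion $\bar{\calS}\subseteq\calM$ (left implicit in the paper) and phrase the graph-approximation estimate via pair correspondences (theorem \ref{teo1}) rather than via Petersen's net lemma (lemma \ref{lema1}), while the smoothing of the glued tubes and caps that you defer to ``bookkeeping'' is exactly the collar-neighborhood step the paper carries out at the end.
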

\begin{proof}The proof is separated in several steps as follows.  First, for any compact metric pair $(X,A)$ we construct a 1-dimensional simplicial complex $L$ together with a subcomplex $K$ such that it is close to it. After that, we build a surface $S$ with a 2-dimensional submanifold with boundary $P$ which are close to $(L,K)$. Finally, using triangle inequality, we can make $(S,P)$ as close as we want to $(X,A)$ proving the density.
\subsubsection*{1-complex construction}

We begin by taking a compact length metric pair $(X,A)$. For every integer $n>3$ we choose a minimal $10^{-n}$-dense set \[A_n=\{x_i=x_i(n)\}_{i=1}^{a(n)}\subset A.\] Recall that $D\subset X$ is a $\nu$-\textit{dense set} if for each $x \in X$ there exists $a(x) \in D$ so that $d_X(x,a(x)) < \nu$. The existence of a minimal $\nu$-dense set, for any $\nu>0$ follows from the compactness of $X$ (see, for example, \cite[Exercise~1.6.4]{buragoburagoivanov}).

Furthermore, $D$ is a \textit{minimal} $\nu$-dense set if $D$ is $\nu$-dense and if $a_1,a_2 \in D$, $a_1\neq a_2$, then $d_X(a_1,a_2)>\nu$. We define the 1-complex $K=K(X,A,n)$ with vertices labeled by $A_n$ and 1-simplexes connecting $x_i$ to $x_j$ of length $d_X(x_i,x_j)$ if and only if 
\[
d_X(x_i,x_j)<\frac{2^n}{10^n}=\frac{1}{5^n}
\]
and there exists a geodesic segment within $A$.

Now, we extend the set $A_n$ to a minimal $10^{-n}$-dense set  $X_n=\{ x_i\}_{i=1}^{a(n)}\bigcup\{ x_i\}_{i=a(n)}^{h(n)}$ of $X$. We also define the extension of the 1-complex $L=L(X,A,n)$ with vertices labeled by $X_n$ and 1-simplices connecting $x_i$ to $x_j$ of length $d_X(x_i,x_j)$ if and only if 
\[
d_X(x_i,x_j)<\frac{2^n}{10^n}=\frac{1}{5^{n}}.
\]

We give $L$ the induced piecewise linear metric denoted by $d_L$. We observe that $K$ is a subcomplex of $L$ and we also use $d_L$. 

The following lemmas help us to compute the distances between the complexes and the spaces.

\begin{lemma}[Lemma 2.3 of \cite{cassorla}]\label{lema2}
    Assuming the same notation as in the proof of theorem \ref{thm:cassorla}. For all $i$ and $j$, 
    \[
        d_L(x_i,x_j)<2^{\mu}d_X(x_i,x_j)+5^{-n},
    \]
 where $\mu=\mu(n)$ is chosen to satisfy $2^{n-\mu}+2<2^{n}$ when $\mu(n)\to 0$ as $n\to\infty$.
\end{lemma}

\begin{lemma}[Example 2, section 2 of \cite{petersen}]\label{lema1} 
    Let $X$ and $Y$ be two compact metric spaces. Suppose that there are $\{v_i\}_{i=1}^{r}\subset X$ and $\{w_i\}_{i=1}^{r}\subset Y$ $\varepsilon$-nets with $\varepsilon>0$ such that 
    \[
    \left| d_X(v_i,v_j)-d_Y(w_i,w_j)\right|<\varepsilon,
    \]
    for all $i,j\in\{1,\dots,r\}$. Then $d_{\GH}(X,Y)\leq 2\varepsilon$.
\end{lemma}

Combining these two lemmas with the observation that $d_L(x_i,x_j)\geq d_X(x_i,x_j)$, for all $i,j$, we obtain an estimate of the distance between the complex pair and the original metric pair.

\begin{equation}\label{coro}
    d_{\GH}((X,A),(L,K))<\left(2^{\mu(n)}-1\right)\diam(X)+5^{-n}.
\end{equation}

\subsubsection*{Construction of the surface}

Now, the second step is to construct a surface near to the simplicial pair $(L,K)$. Let $0<\varepsilon<10^{-(n+1)}$, choose a minimal $\varepsilon$-dense set $\{y_k\}_{k=1}^{v}$ in $K$ and we extend it such that  $ \{y_k\}_{k=1}^{v} \subset \{y_k\}_{k=v+1}^{l}$ is a minimal $\varepsilon$-dense set of $L$ and these sets do not contain any vertices. Since $L$ is 1-dimensional and using Menger's \emph{Einbettungssatz} (Embedding's theorem)(cf. page 285 of \cite{menger}), we embed smoothly in $B_{\tilde{m}}(\bar{0})\subset\RR^3$ with $\tilde{m}=\min_{i\neq j}\{d(x_i,x_j)\}$. Then we extend the image of the simplexes such that the length in $X$ is preserved.

Let us consider $m=\min_{i,k}d_{L}(x_i,y_k)$ and $c_{ij}$ is the image in $\RR^3$ of a 1-simplex in $L$ that joins $x_i$ and $x_j$ if it exists. From now on in this construction, we will consider $L$ to be inside of $\RR^3$ and we measure the distances in $\RR^3$. Additionally, if $D\subset\RR^3$, then $d_D$ is computed as the infimum of lengths of curves in $D$.

If $c_{ij}$ exists, we take $r_{ij}>0$ such that if $r_{ij}>r>0$, we can isolate the vertices, i. e.,
\begin{equation}
    \partial B_r(x_i)\cap c_{ij}=\{p_{ij}(r)\},
\end{equation}
is a point, we can see this configuration illustrated in Figure \ref{fig2}, and
\begin{equation}
    B_r(x_i)\cap c_{kg}=\emptyset
\end{equation}
for $k\neq i$.

\begin{figure}[ht]
\centering
\begin{tikzpicture}
  \coordinate [label=left:$x_i$]  (A) at (0,0);
  \coordinate [label=-30:$p_{ij}(r)$] (B) at (1.25,0.25);
  \coordinate[label=right:$x_j$] (C) at (5,1);
  \coordinate[label=above:$c_{ij}$] (D) at (3.75,0.75);
  
  \draw (A) -- (C);

   \fill (A)  circle (2pt);
   \fill (B) circle (2pt);
   \fill (C)  circle (2pt);

  \draw[loosely dashed] (A) let
              \p1 = ($ (B) - (A) $)
            in
              circle ({veclen(\x1,\y1)});

  \def\radius{1.27}
  \coordinate (E) at ({\radius*cos(100)},{\radius*sin(100)});

  \draw (A) -- (E);
  \node[fill=white] at ($ (A)!0.5!(E) $) {$r$};
\end{tikzpicture}
\caption{}
\label{fig1}
\end{figure}
If $c_{ij}$ does not exists, let $\{p_{ij}(r)\}=\emptyset$. Also, we choose 
\[
0<\bar{r}<\min\left\{\frac{r_{ij}}{3},\frac{m}{3},\frac{\varepsilon}{8\pi l}\right\}
\]
so we isolate $x_i$ from the rest of $x_j$'s and $y_k$'s, and the size of the ball is sufficiently small. 

Let $p_{ij}=p_{ij}(\bar{r})$. For each $i$ and $t>0$, let $T_i(t)=\partial B_{\bar{r}}(x_i)\smallsetminus\cup_j B_{t}(p_{ij})$, illustrated in Figure \ref{fig2}.  We see that $T_i(t)$ approaches $\partial B_{\bar{r}}(x_i)\smallsetminus \cup_j \{p_{ij}\}$ as $t\to 0$.

\begin{figure}[ht]
\centering
\begin{tikzpicture}

    \coordinate [label=left:$x_i$]  (A) at (0,0);
    \fill (A)  circle (2pt);
    
    \def\radius{2}
    \draw[line width=1pt] (2,0) arc (0:40:2);
    \draw[dashed] ({\radius*cos(40)},{\radius*sin(40)}) arc (40:80:2);
    \draw[line width=1pt] ({\radius*cos(80)},{\radius*sin(80)}) arc (80:120:2);
    \draw[dashed] ({\radius*cos(120)},{\radius*sin(120)}) arc (120:160:2);
    \draw[line width=1pt] ({\radius*cos(160)},{\radius*sin(160)}) arc (160:200:2);
    \draw[dashed] ({\radius*cos(200)},{\radius*sin(200)}) arc (200:240:2);
    \draw[line width=1pt] ({\radius*cos(240)},{\radius*sin(240)}) arc (240:280:2);
    \draw[dashed] ({\radius*cos(280)},{\radius*sin(280)}) arc (280:320:2);
    \draw[line width=1pt] ({\radius*cos(320)},{\radius*sin(320)}) arc (320:360:2);

    \coordinate (pij) at ({\radius*cos(60)},{\radius*sin(60)});
    \fill (pij)  circle (2pt);
    \draw[dashed] (pij) circle (0.7);
    \node at ($(pij) + (1.7,0)$) {$B_t(p_{ij})$};
    
    \coordinate  (pik) at ({\radius*cos(140)},{\radius*sin(140)});
    \fill (pik)  circle (2pt);
    \draw[dashed] (pik) circle (0.7);
    
    \coordinate  (pih) at ({\radius*cos(220)},{\radius*sin(220)});
    \fill (pih)  circle (2pt);
    \draw[dashed] (pih) circle (0.7);

    \coordinate  (pil) at ({\radius*cos(300)},{\radius*sin(300)});
    \fill (pil)  circle (2pt);
    \draw[dashed] (pil) circle (0.7);
    \node at ($(pil) + (1.7,0)$) {$B_t(p_{il})$};

    \node at (2.5,0) {$T_i(t)$};
    
\end{tikzpicture}
\caption{}
\label{fig2}
\end{figure}
For each $i$ we take $t_i$ such that, if $t_i>t>0$, then 
\begin{equation}
    B_t(p_{ij})\cap B_t(p_{ik})=\emptyset,
\end{equation}
$j\neq k$, and for any $v,w\in T_i(t)$,
\begin{equation}\label{eq_ti}
    d_{T_i(t)}(v,w)\leq d_{\partial B_{x_i}(\bar{r})}(v,w)+\pi \bar{r}.
\end{equation}
That is, we isolate the holes in $T_i(t)$ and we make them sufficiently small. We also take $t_0$ such that if $t\in (0,t_0)$, we have the following properties:
\begin{enumerate}[label=(\alph*)]
    \item $C_{ij}(t)=\left\{z\in\RR^3| d(z,c_{ij})=t\right\}$ is a submanifold outside of $B_{\bar{r}}(x_i)$ and $B_{\bar{r}}(x_j)$ for every $i$ and $j$, this setting can be seen in Figure \ref{fig3}.
    \begin{figure}[ht]
    \centering
    \begin{tikzpicture}
    \coordinate [label=below:$x_i$]  (xi) at (-3,-0.5);
    \coordinate [label=above:$x_j$]  (xj) at (3,0.5);
    \draw (xi) -- (xj);
    \fill (xi) circle (2pt);
    \fill (xj) circle (2pt);

    \draw[dashed] (xi) ellipse (0.4 and 1);
    \draw ($(xi)+({cos(90)},{sin(90)})$) arc (90:270:0.8 and 1);
    \draw[dashed] (xj) ellipse (0.4 and 1);
    \draw ($(xj)+({cos(270)},{sin(270)})$) arc (270:450:0.8 and 1);

    \coordinate (ai) at ($(xi)+(0,1)$);
    \coordinate (bi) at ($(xi)-(0,1)$);
    \coordinate (aj) at ($(xj)+(0,1)$);
    \coordinate (bj) at ($(xj)-(0,1)$);
    \draw (ai)--(aj);
    \draw (bi)--(bj);

    \node at ($(xi)+(0,1.35)$) {$C_{ij}(t)$};
\end{tikzpicture}
\caption{}
\label{fig3}
\end{figure}
    \item The tubes with a common endpoint only intersect inside the balls around the corresponding vertex,
    \begin{equation}\label{eq_b}
    C_{ij}(t)\cap C_{ig}(t)\subset B_{\bar{r}}(x_i),
    \end{equation}
    for $j\neq g$.
    \item The tubes without common endpoints do not intersect,
    \begin{equation}\label{eq_c}
        C_{ij}(t)\cap C_{kg}(t)=\emptyset,
    \end{equation}
    for $i\neq k$ and $j\neq g$.

    \item We define $S_k(t)=\left\{z\in C_{ij}|d(y_k,z)=t\right\}$, which are shown in Figure \ref{fig4}.
    \begin{figure}[ht]
    \centering
\begin{tikzpicture}
    \coordinate [label=below:$x_i$]  (xi) at (-3,-0.5);
    \coordinate [label=above:$x_j$]  (xj) at (3,0.5);
    \coordinate [label=below:$y_k$] (yk) at (1,0.1666667);
    \draw (xi) -- (xj);
    \fill (xi) circle (2pt);
    \fill (xj) circle (2pt);
    \fill (yk) circle (2pt);

    \draw[dashed] (xi) ellipse (0.4 and 1);
    \draw ($(xi)+({cos(90)},{sin(90)})$) arc (90:270:0.8 and 1);
    \draw[dashed] (xj) ellipse (0.4 and 1);
   \draw ($(xj)+({cos(270)},{sin(270)})$) arc (270:450:0.8 and 1);
    \draw[line width=1pt] (yk) ellipse (0.4 and 1);

    \node at (1,-1.35) {$S_k(t)$};

    \coordinate (ai) at ($(xi)+(0,1)$);
    \coordinate (bi) at ($(xi)-(0,1)$);
    \coordinate (aj) at ($(xj)+(0,1)$);
    \coordinate (bj) at ($(xj)-(0,1)$);
    \draw (ai)--(aj);
    \draw (bi)--(bj);

    \node at ($(xi)+(0,1.35)$) {$C_{ij}(t)$};
\end{tikzpicture}
\caption{}
\label{fig4}
\end{figure} 
    If $y_k$ and $y_g$ are on $c_{ij}$, then the comparison of the distance between these sets and the distance between the centers is small,
    \begin{equation}\label{eq_d}
        \left|d_{C_{ij}(t)}(S_k(t),S_g(t))-d_L(y_k,y_g)\right|<\frac{\varepsilon}{2l}
    \end{equation}   
    for every $i$, $j$, $k$ and $g$. 
    \item The comparison between the distance the any $S_k(t)$ and the ball around the end points, and the distance between the center of the set and the center of the corresponding hole is also small,
    \begin{equation}\label{eq_e}
        \left|d_{C_{ij}(t)}(S_k(t),B_{\bar{r}}(x_i))-d_L(y_k,p_{ij})\right|<\frac{\varepsilon}{4l}
    \end{equation}
    for every $i$, $j$ and $k$.
\end{enumerate}
By (\ref{eq_b}), $t_0<\bar{r}$ and $C_{ij}(t)\cap B_{\bar{r}}(x_k)=\emptyset$ for $0<t<t_0$ and $i\neq k\neq j$. Also by (\ref{eq_b}), the sets $S_k(t)$ are actually circles of radius $t$ centered at $y_k$. Thanks to this and since $c_{ij}$ is smoothly embedded, (\ref{eq_d}) and (\ref{eq_e}) can always be satisfied. Thus, we set 
\[
0<\bar{t}<\min\left\{ t_i,\frac{\varepsilon}{8\pi l} \right\}.
\]

Now we begin the construction of a surface that is $10^{-n}$-close to $L$ by defining a metric space $S$ as follows. We replace each vertex $x_i$ in $L$ by $T_i(\bar{t})$ and we replace each $c_{ij}$ by $C_{ij}(\bar{t})$, up to the circles of intersection with $T_{i}(\bar{t})$ and $T_{j}(\bar{t})$. We illustrate this construction in Figure \ref{fig5}. Thanks to (\ref{eq_b}) and (\ref{eq_c}), $S$ does not have self-intersections. Also we notice that $S$ is not smooth in $N$, a finite number of circles, because of the way we construct it. Finally, we use the intrinsic metric on $S$.

\begin{figure}[ht]
\centering
\begin{tikzpicture}

    \coordinate [label=left:$x_i$]  (A) at (0,0);
    \fill (A)  circle (2pt);
    
    \def\radius{2}
    \draw[line width=1pt] (2,0) arc (0:40:2);
    \draw[line width=1pt] ({\radius*cos(80)},{\radius*sin(80)}) arc (80:120:2);
    \draw[line width=1pt] ({\radius*cos(160)},{\radius*sin(160)}) arc (160:200:2);
    \draw[line width=1pt] ({\radius*cos(240)},{\radius*sin(240)}) arc (240:280:2);
    \draw[line width=1pt] ({\radius*cos(320)},{\radius*sin(320)}) arc (320:360:2);

    \draw[dashed,rotate=90] (A) ellipse (0.8 and \radius);

    \coordinate (pij) at ({\radius*cos(60)},{\radius*sin(60)});
    \fill (pij) circle (2pt);
    \coordinate (pij1) at ({\radius*cos(40)},{\radius*sin(40)});
    \coordinate (pij2) at ({\radius*cos(80)},{\radius*sin(80)});

    \coordinate  (pik) at ({\radius*cos(140)},{\radius*sin(140)});
    \fill (pik) circle (2pt);
    \coordinate (pik1) at ({\radius*cos(120)},{\radius*sin(120)});
    \coordinate (pik2) at ({\radius*cos(160)},{\radius*sin(160)});
    
    \coordinate  (pih) at ({\radius*cos(220)},{\radius*sin(220)});
    \fill (pih) circle (2pt);
    \coordinate (pih1) at ({\radius*cos(200)},{\radius*sin(200)});
    \coordinate (pih2) at ({\radius*cos(240)},{\radius*sin(240)});

    \coordinate  (pil) at ({\radius*cos(300)},{\radius*sin(300)});
    \fill (pil) circle (2pt);
    \coordinate (pil1) at ({\radius*cos(280)},{\radius*sin(280)});
    \coordinate (pil2) at ({\radius*cos(320)},{\radius*sin(320)});

    \node at (2.5,0) {$T_i(t)$};

    \coordinate (a1) at ({3.5*cos(60)},{3.5*sin(60)});
    \coordinate (pij11) at ({3.5*cos(48.5)},{3.5*sin(48.5)});
    \coordinate (pij22) at ({3.5*cos(71.5)},{3.5*sin(71.5)});
    \draw (pij1)--(pij11);
    \draw (pij2)--(pij22);
    
    \draw[dashed,rotate=60] (a1) ellipse (0.3 and 0.7);
    \draw[dashed,rotate=60] (pij) ellipse (0.3 and 0.7);

    \coordinate (a2) at ({3.5*cos(140)},{3.5*sin(140)}); 
    \coordinate (pik11) at ({3.5*cos(128.5)},{3.5*sin(128.5)});
    \coordinate (pik22) at ({3.5*cos(151.5)},{3.5*sin(151.5)});
    \draw (pik1)--(pik11);
    \draw (pik2)--(pik22);
    
    \draw[dashed,rotate=140] (a2) ellipse (0.3 and 0.7);
    \draw[dashed,rotate=140] (pik) ellipse (0.3 and 0.7);

    \coordinate (a3) at ({3.5*cos(220)},{3.5*sin(220)});
    \coordinate (pih11) at ({3.5*cos(208.5)},{3.5*sin(208.5)});
    \coordinate (pih22) at ({3.5*cos(231.5)},{3.5*sin(231.5)});
    \draw (pih1)--(pih11);
    \draw (pih2)--(pih22);
    \draw[dashed,rotate=220] (a3) ellipse (0.3 and 0.7);
    \draw[dashed,rotate=220] (pih) ellipse (0.3 and 0.7);

    \coordinate (a4) at ({3.5*cos(300)},{3.5*sin(300)});
    \coordinate (pil11) at ({3.5*cos(288.5)},{3.5*sin(288.5)});
    \coordinate (pil22) at ({3.5*cos(311.5)},{3.5*sin(311.5)});
    \draw (pil1)--(pil11);
    \draw (pil2)--(pil22);
    \draw[dashed,rotate=300] (a4) ellipse (0.3 and 0.7);
    \draw[dashed,rotate=300] (pil) ellipse (0.3 and 0.7);

    \node at (0.35,3) {$C_{ij}(t)$};

    \coordinate (b1) at ({3.5*cos(60)},{3.5*sin(60)});
    \coordinate (b2) at ({3.5*cos(140)},{3.5*sin(140)}); 
    \coordinate (b3) at ({3.5*cos(220)},{3.5*sin(220)});
    \coordinate (b4) at ({3.5*cos(300)},{3.5*sin(300)});

    \coordinate (c1) at ({4.5*cos(60)},{4.5*sin(60)});
    \coordinate (c2) at ({4.5*cos(140)},{4.5*sin(140)}); 
    \coordinate (c3) at ({4.5*cos(220)},{4.5*sin(220)});
    \coordinate (c4) at ({4.5*cos(300)},{4.5*sin(300)});

    \draw[line width=1pt] (b1) -- (c1);
    \draw[line width=1pt] (b2) -- (c2);
    \draw[line width=1pt] (b3) -- (c3);
    \draw[line width=1pt] (b4) -- (c4);

    \node at ($(c1)+(0.55,0)$) {$c_{ij}(t)$};
    \node at ($(pij)-(0.5,0.6)$) {$p_{ij}(r)$};
    
\end{tikzpicture}
\caption{}
\label{fig5}
\end{figure}

This metric space is the candidate to be the Riemannian surface close to the metric pair. In order to estimate the distance between $S$ and $X$ and apply Lemma \ref{lema1} we choose for each $y_j$ in $L$ a point $z_j\in S_j(\bar{t})$. By definition of $\bar{t}$ and $\bar{r}$,
\begin{equation}\label{dist_sing}
    d(N,\left\{z_j\right\}_{j=1}^{l})>0.
\end{equation}

We notice that any selection of $z_j$'s is a $3\varepsilon$-net in $S$. This is because for any $x\in S$, the length of any shortest curve joining $x$ with $z_f$, the closest of the $z_j$'s, can be bounded as follows:
\[
d(x,z_f)\leq \underbrace{2\varepsilon+\frac{\varepsilon}{2l}+2\pi\bar{t}}_{\text{A}}+\underbrace{2\pi \bar{r}}_{\text{B}}.
\]

Part A bounds the section of the curve in the tube using equation (\ref{eq_d}), the distance in the circle $S_f$ and the minimality of $\{y_j\}_{i=1}^{l}$. And part B is a bound for the section on the sphere obtained with equation (\ref{eq_ti}). Finally, A$+$B is smaller than $3\varepsilon$ due to the definition of $\bar{t}$ and $\bar{r}$. 

Finally, we have to estimate
\[
\delta=\left|d_L(y_i,y_j)-d_S(z_i,z_j)\right|.
\]
We assume first that $y_i$ and $y_j$ can be joined without going through any other $y_k$. Then $d_L(y_i,y_k)<4\varepsilon$. If both points lie on the same 1-simplex, then using (\ref{eq_d}) and the fact the $S_k(\bar{t})$ is a circle of radius $\bar{t}$:
        \[
        \delta\leq \frac{\varepsilon}{2l}+2\pi\bar{t}<\frac{\varepsilon}{2l}+\frac{\varepsilon}{4l}<\frac{\varepsilon}{l}.
        \]
Moreover, if both points do not lie in the same segment, then using (\ref{eq_ti}) and (\ref{eq_e}) we obtain on one hand
        \begin{equation}\label{desiq}
        \left|d_S(S_i(\bar{t}),S_j(\bar{t}))-d_S(z_i,z_j)\right|\leq2\pi \bar{t},
        \end{equation}
because those numbers are almost the same up to the distance between two points in a circle. On the other hand
        \begin{eqnarray}
        \left|d_S(S_i(\bar{t}),S_j(\bar{t}))-d_L(y_i,y_j)\right|&=&\left|d_S(S_i(\bar{t}),S_j(\bar{t}))-d_L(y_i,x_k)-d_L(x_k,y_j)\right|\nonumber\\&\leq&\left|d_S(S_i(\bar{t}),B_{\bar{r}}(x_k))+2\pi \bar{r} +d_S(B_{\bar{r}}(x_k),S_j(\bar{t}))\right.\nonumber\\
        & &\left.-d_L(y_i,x_k)-d_L(x_k,y_j)\right|\nonumber\\
        &\leq& \left|\frac{\varepsilon}{4l}+d_L(y_i,p_{kg})+2\pi\bar{r}+\frac{\varepsilon}{4l}+d_L(y_j,p_{kd})\right.\nonumber\\ & &\left.-d_L(y_i,x_k)-d_L(x_k,y_j)\right|\nonumber\\
        &=&\frac{\varepsilon}{2l}+2\pi\bar{r}-2\bar{r}\nonumber\\
        &\leq&\frac{\varepsilon}{2l}+2\pi\bar{r}\label{desiq2}
        \end{eqnarray}
        is satisfied using (\ref{eq_ti}), (\ref{eq_e}) and the definition of the distance between points in $L$. Combining (\ref{desiq}) and (\ref{desiq2}) we get in this case
        \begin{eqnarray*}
            \delta&\leq&\left|d_S(S_i(\bar{t}),S_j(\bar{t}))-d_S(z_i,z_j)\right|+\left|d_S(S_i(\bar{t}),S_j(\bar{t}))-d_L(y_i,y_j)\right|\nonumber\\&<&2\pi\bar{t}+2\pi\bar{r}+\frac{\varepsilon}{2l}<\frac{\varepsilon}{4l}+\frac{\varepsilon}{4l}+\frac{\varepsilon}{2l}=\frac{\varepsilon}{l}.
        \end{eqnarray*}
 
 In the general case, when $y_i$ and $y_j$ are any points, we join them a geodesic in $L$ passing consecutively through the points $y_i=y_{i_0}$, $y_{i_1}$, $\dots$, $y_{i_k}=y_j$. We use the simple cases to get
 \begin{equation}\label{desiq3}
 \sum_{v=0}^{k}\left|d_S(z_{i_{v+1}},z_{i_{v}})-d_L(y_{i_{v+1}},y_{i_{v}})\right|<\varepsilon.
 \end{equation}
 Also, using a geodesic in $S$ joining $z_i$ and $z_j$, and passing consecutively through $S_i(\bar{t})=S_{i_0}(\bar{t})$, $\dots$, $S_{i_k}(\bar{t})=S_{j}(\bar{t})$,  we obtain
 \begin{equation}\label{desiq4}
     \sum_{v=0}^{d}\left|d_S(S_{i_{v+1}}(\bar{t}),S_{i_{v}}(\bar{t}))-d_L(y_{i_{v+1}},y_{i_{v}})\right|<\varepsilon.
 \end{equation}
 Joining (\ref{desiq3}) and (\ref{desiq4}),
 \begin{eqnarray*}
 \delta&=&\left|\sum_{v=0}^{k}d_S(S_{i_{v+1}}(\bar{t}),S_{i_{v}}(\bar{t}))-\sum_{v=0}^{k}d_L(y_{i_{v+1}},y_{i_{v}})\right|    \\
 &\leq&\sum_{v=0}^{k}\left|d_S(S_{i_{v+1}}(\bar{t}),S_{i_{v}}(\bar{t}))-d_L(y_{i_{v+1}},y_{i_{v}})\right|\\
 &\leq&\sum_{v=0}^{k}\left| d_S(z_{i_{v+1}},z_{i_{v}})-d_L(y_{i_{v+1}},y_{i_{v}})\right|<\varepsilon.
 \end{eqnarray*}

 Finally, by Lemma \ref{lema2}, $d_{\GH}(S,L)<6\varepsilon$. 
 
 We call P to the restriction of $S$ to the $T_i$'s and $C_{jk}$'s coming from $K$ and we notice that it is a metric space using $d_S$. If we take care of the connectivity of $K$ and work on every component of $K$ and $P$ as before, we obtain that $d_{\GH}(K,P)<6\varepsilon$. Thus, $d_{\GH}((S,P),(L,K))<12\varepsilon$.

Thanks to (\ref{dist_sing}), using sufficiently small collar neighborhoods, we can attach the $T_{i}$'s and $C_{jk}$'s in a smooth way keeping the estimates
\[
\delta<2\varepsilon\text{ and }\{z_i\}_{i=1}^{l}\text{ is a $12\varepsilon$-dense set.}
\]
From this we obtain
\[
d_{\GH}((S,P),(L,K))<24\varepsilon.
\]

\subsubsection*{Conclusion}

We estimate the distance the metric pairs $(X,A)$ and $(S,P)$ using (\ref{coro}) and the last results:
\begin{eqnarray*}
d_{\GH}((X,A),(S,P))&\leq& d_{\GH}((X,A),(L,K))+d_{\GH}((L,K),(S,P))\\
&\leq& \left(2^{\mu(n)}-1\right)\diam(X)+5^{-n}+24\varepsilon\\
&\leq&\left(2^{\mu(n)}-1\right)\diam(X)+5^{-n}+24\,(10)^{-(n+1)}\to 0\text{ as }n\to\infty.
\end{eqnarray*}
 \end{proof}
\begin{rema}
    If the subset $A$ of the metric pair $(X,A)$ is geodesically convex, then the 2-dimensional  submanifold given by theorem \ref{thm:cassorla} is connected.
\end{rema}

\begin{rema}
    For any compact metric tuple $(X,X_k,\dots,X_1)$ we can obtain a Riemannian surface along with $k$ nested 2-submanifolds with boundary. We follow the same construction as before, starting to build the 1-simplicial complex with the smallest closed subset.
\end{rema}

\section{Applications}\label{s:applications}

In recent years the Hausdorff and Gromov--Hausdorff distances have been used as quantitative measurements of the stability of different constructions in computational mathematics. 
In this section, we present some applied scenarios where the Gromov--Hausdorff distance for metric pairs and tuples appears and give a new perspective for those particular contexts. We believe that this section broadens the scope for the use of the Gromov--Hausdorff distance for metric pairs and tuples and presents interesting lines for future research.

\subsection{Hypernetworks}
\mbox{}%
\vspace{5pt}

Recently, Needham and collaborators delved into the theory of \textit{hypergraphs} and \textit{hypernetworks} \cite{needham_original, needham2024}. A \textit{network} is a triple $(X,X,\omega)$, where $X$ is a set and $\omega\colon X\times X\to \RR $ is an arbitrary function. A \textit{hypergraph} is a pair $(X,Y)$ where $X$ represents the set of nodes and $Y$ is a collection of subsets of $X$, called hyperedges. More generally, an \textit{hypernetwork} is a triple $(X,Y,\omega)$ such that $X$ and $Y$ are sets and $\omega\colon X\times Y\to\mathbb{R}$ is an arbitrary function. In particular, hypernetworks generalize networks, which arise as the special case $Y=X$ (see Example 2.8 of \cite{needham2024}). A hypergraph is an example of a hypernetwork where $\omega$ is the incidence function.

We can see a metric pair $(X,A)$ as a network $(X\times A,X\times A,\omega_X)$ with 
\[
\omega_X\colon(X\times A)\times(X\times A)\to \RR
\]
given by 
\[
\omega_X((x,a),(x',a'))=\frac{d_X(x,x')+d_X(a,a')}{2}.
\]
We can do something analogous for a metric tuple $(X,A_1,\dots,A_k)$ by considering 
\[
\omega_X\colon(X\times A_1\times\dots\times A_k)\times(X\times A_1\times\dots\times A_k)\to \RR
\]
given by 
\[
\omega_X((x,a_1,\dots,a_k),(x',a'_1,\dots,a'_k))=\frac{d_X(x,x')+d_X(a_1,a'_1)+\dots+d_X(a_k,a'_k)}{k}.
\]

In \cite{needham2024}, a Gromov--Hausdorff distance for networks and hypernetwork is defined. In the context of metric pairs seen as networks, taking the metric pairs $(X,A)$ and $(Y,B)$ seen as networks, and a correspondence $R$ between $X\times A$ and $Y\times B$ we get:
\begin{eqnarray*}
    \dis_{\net}(R)&:=&\sup\limits_{\substack{((x,a),(y,b)),\\ ((x',a'),(y',b'))\in R}}\left| \omega_X((x,a),(x',a'))-\omega_Y((y,b),(y',b')) \right|\\
    &=&\sup\limits_{\substack{((x,a),(y,b)),\\ ((x',a'),(y',b'))\in R}}\left| \frac{d_X(x,x')+d_X(a,a')-d_Y(y,y')-d_Y(b,b')}{2} \right|\\
    &\leq&\frac{1}{2}\left(\sup\limits_{\substack{((x,a),(y,b)),\\ ((x',a'),(y',b'))\in R}}\left| d_X(x,x')-d_Y(y,y')\right|+\sup\limits_{\substack{((x,a),(y,b)),\\ ((x',a'),(y',b'))\in R}}\left| d_X(a,a')-d_Y(b,b')\right| \right)\\
    &=&\dis(R).
\end{eqnarray*}
This means that 
\[
d_{\net}(X\times A, Y \times B):=\frac{1}{2}\inf\dis_{\net}(R)\leq d_{\GH}((X,A),(Y,B)),
\]
where the infimum runs over all correspondences $R$ between $X\times A$ and $Y \times B$.

\subsection{Simplicial Hausdorff Distance}
\mbox{}%
\vspace{5pt}

In \cite{nnadi}, Nnadi and Isaksen defined a version of the Hausdorff distance between simplicial complexes. To do that, they consider the class $\mathbb{X}^d$ of pairs $(X, f)$, where $X$ is a finite simplicial complex, $f\colon X_0\to\mathbb{R}^d$ is an injective map and $X_0$ is the set of vertices of $X$, and defined the distance as follows:

\begin{definition}[Definition 2.2, \cite{nnadi}]{\label{def:nnadi_hausdorff_distance}}
    The \textit{simplicial Hausdorff distance} is a map $\delta\colon\mathbb{X}^d\times\mathbb{X}^d\to\mathbb{R}_{\geq0}$, such that \begin{equation}{\label{eq:nnadi_hausdorff_distance}}
        \delta\left((X,f), (Y,g)\right) = \max\left\{\Vec{d}((X,f),(Y,g)), \Vec{d}((Y,g),(X,f))\right\},
    \end{equation}where the \textit{directed distance} $\Vec{d}((X,f),(Y,g))=\inf\{\epsilon>0\colon(X,f)\text{ is }\epsilon\text{-close to }(Y,g)\}$. The pair $(X,f)$ is \textit{$\epsilon$-close} to $(Y,g)$ if and only if: for every $k$-simplex $\sigma$ in $X$, there is a $k$-simplex $\tau$ in $Y$ such that for any vertex $v\in\sigma$, there is some vertex $w\in\tau$ with $d(f(v),g(w))<\epsilon$, for every $k$.
\end{definition}

Moreover, the authors also defined a distance between simplicial filtrations. Consider simplicial filtrations as sets $\{X_\alpha\}$ of simplicial complexes such that $X_\alpha\subseteq X_\beta$ for $\alpha\leq\beta$. Additionally, they also denote $(X,f)$ as a filtered complex, such that for every $\alpha>0$, $X_\alpha$ is a simplicial subcomplex on the vertex set $X_0$ with index value $\alpha$ and $f\colon X_0\to\mathbb{R}^d$ a measurement function on the vertex set. Then

\begin{definition}[Definition 2.20, \cite{nnadi}]{\label{def:nnadi_hausdorff_distance_filtered}}
    The \textit{simplicial Hausdorff distance} between two filtered complexes $(X,f)$ and $(Y,g)$ is \begin{equation}{\label{eq:nnadi_hausdorff_distance_filtered}}
        \hat{\delta}\left((X,f), (Y,g)\right)=\max\left\{\Vec{\mathrm{d}}((X,f),(Y,g)),\Vec{\mathrm{d}}((Y,g),(X,f))\right\},
    \end{equation}where $\Vec{\mathrm{d}}((X,f),(Y,g))=\sup_{\alpha>0}\{\Vec{d}((X_\alpha,f),(Y_\alpha,g))\}$.
\end{definition}

Let us denote by $|(X,f)|$ the geometric realization of the simplicial complex $X$ generated by the map $f$, i.e. the set given by
\[
\bigcup_{\sigma\in X} \Conv(f(\sigma))\subset \RR^D,
\]
where $\Conv(\cdot)$ denotes the convex hull, and $D\in\mathbb{N}$ big enough such that $|(X,f)|$ can be correctly embedded, i.e., $D\geq |X_0|$ would be enough. It is not difficult to see that 
\begin{equation}\label{eq:hausdorff-nnadi}
d_\H(|(X,f)|,|(Y,g)|)\leq \delta((X,f),(Y,g)),
\end{equation}
where $\delta$ denotes the simplicial Hausdorff distance from \cite{nnadi}.

Let $\{(X_\alpha,f)\}$ be a simplicial filtration. After taking geometric realizations, we obtain a filtration $\{|(X_\alpha,f)|\}$ with $|(X_\alpha,f)|\subset\mathbb{R}^D$, for every $\alpha$. Consider now $\{|(X_\alpha,f)|\}_{\alpha = 0}^k$ and $\{|(Y_\beta,g)|\}_{\beta = 0}^k$ the geometric realizations of two simplicial filtrations with the same length $k$. Then, we obtain metric tuples $(|(X_k,f)|,\dots,|(X_0,f)|)$ and $(|(Y_k,g)|,\dots,|(Y_0,g)|)$, and we can compute their Gromov--Hausdorff distance. By \eqref{eq:hausdorff-nnadi}, it follows that
\begin{equation}{\label{filtrationgromovhausdorff1}}
    d_{\H}(\{|(X_\alpha,f)|\},\{|(Y_\alpha,g)|\})=\sum_{\alpha=0}^{k}d_{\H}(|(X_\alpha,f)|,|(Y_\alpha,g)|)\leq (k+1)\hat\delta((X,f),(Y,g)),
\end{equation}
where $\hat{\delta}$ denotes the simplicial Hausdorff distance between filtered complexes, as defined in Definition \ref{def:nnadi_hausdorff_distance_filtered}. On the other hand, by definition,
\begin{equation}{\label{filtrationgromovhausdorff2}}
\hat{\delta}((X,f),(Y,g))\leq d_{\H}(\{|(X_\alpha,f)|\},\{|(Y_\alpha,g)|\}).
\end{equation}
Therefore, $\hat\delta$ and $d_\H$ are equivalent metrics on the set of simplicial complexes in $\RR^D$.

Furthermore, we observe that it is possible to globalize the distance between simplicial complexes introduced in \cite{nnadi} à la Gromov, as follows: consider pairs of the form $(X,\mathcal{X})$ where $X$ is a finite metric space and $\mathcal{X}$ is a simplicial complex with set of vertices $X$. Then, given two such pairs, $(X,\mathcal{X})$ and $(Y,\mathcal{Y})$, we can define their \textit{simplicial Gromov--Hausdorff distance} by
\[
\delta_{\GH}((X,\mathcal{X}),(Y,\mathcal{Y})) = \inf\{\delta^Z((\calX,\phi),(\calY,\psi))\}
\]
where the infimum runs over metric spaces $(Z,d_Z)$ and isometric embeddings $\phi\colon X\to Z$, $\psi\colon Y\to Z$, and where $\delta^Z$ is analogous to the simplicial Hausdorff distance, but for simplicial complexes embedded in $Z$, i.e.
\[
\delta^Z((\mathcal{X},\phi),(\mathcal{Y},\psi)) = \max\left\{\vec{d}_Z((\mathcal{X},\phi),(\mathcal{Y},\psi)),\vec{d}_Z((\mathcal{Y},\psi),(\mathcal{X},\phi))\right\}
\]
where
\[
\vec{d}_Z((\mathcal{X},\phi),(\mathcal{Y},\psi)) = \max_k \max_{\substack{\sigma \in \mathcal{X}\\
\dim(\sigma) = k}} \min_{\substack{\tau\in \mathcal{Y}\\
\dim(\tau) = k}} \max_{v\in \sigma} \min_{w\in \tau} d_Z(\phi(v),\psi(w)).
\]

It is obvious that $\delta_{\GH}$ is non-negative and symmetric. Moreover, the triangle inequality follows from the triangle inequality satisfied by the simplicial Hausdorff distance (which can be proved in any metric space $Z$ just as in $\RR^d$) and an argument analogous to the proof of \cite[Proposition 1.6]{jansen2017}. 

Finally, we say that $(X,\calX)$ and $(Y,\calY)$ are \textit{isomorphic} if there exists an isometry $\phi\colon X\to Y$ such that $\sigma$ is a $k$-simplex in $\calX$ if and only if $\phi(\sigma)$ is a $k$-simplicial complex in $\calY$. In that case, by taking $Z=Y$ and $\psi=\mathrm{id}_Y$, it easily follows that $\delta^Z((\calX,\phi),(\calY,\psi))=0$, which in turn implies $\delta_{\GH}((X,\calX),(Y,\calY))=0$. Conversely, if $\delta_{\GH}((X,\calX),(Y,\calY))=0$ then (following again the proof of \cite[Proposition 1.6]{jansen2017}), for any $n\in \NN$ there exist metric spaces $Z_n$ and isometric embeddings $\phi_n\colon X\to Z_n$, $\psi_n\colon Y\to Z_n$ such that 
\[
\delta^{Z_n}((\calX,\phi_n),(\calY,\psi_n))\leq \frac{1}{n}.
\]
By the finiteness of $\calX$ and $\calY$, up to passing to a subsequence, we can assume that for any $x\in X$ there exists $f(x)\in Y$ such that
\[
d^{Z_n}(\phi_n(x),\psi_n(f(x)))<\frac{1}{n}
\]
This induces a distance preserving map $f\colon X\to Y$. Then, again by the finiteness of $\calY$, we can assume that for any $k$-simplex $\sigma\in\calX$, $f(\sigma) \in \calY$. By repeating the same argument in the opposite direction, and up to passing to a further subsequence, we obtain a distance preserving map $g\colon Y\to X$ such that for any $k$-simplex $\tau \in \calY$, $g(\tau)\in \calX$ and such that 
\[
d^{Z_n}(\phi_n(g(y)),\psi_n(y))<\frac{1}{n}.
\]
Since $\phi_n$ and $\psi_n$ are isometric embeddings, we have
\[
d(g\circ f(x),x) \leq d^{Z_n}(\phi_n(g\circ f(x)),\psi_n(f(x)))+d^{Z_n}(\psi_n(f(x)),\phi_n(x)) \to 0.
\]
In other words, $f$ and $g$ induce isomorphisms between $(X,\calX)$ and $(Y,\calY)$.

The previous discussion shows that $\delta_{\GH}$ is truly a distance between finite metric simplicial complexes, and by \eqref{eq:hausdorff-nnadi} it easily follows that
\[
d_{\GH}(X,Y) \leq \delta_{\GH}((X,\calX),(Y,\calY)).
\]
We could even extend $\delta_{\GH}$ to simplicial filtrations: given simplicial filtrations $(X,\mathcal{X})=\{(X_\alpha,\mathcal{X}_\alpha)\}$ and $(Y,\mathcal{Y})=\{(Y_\alpha,\mathcal{Y}_\alpha)\}$, let
\[
\hat{\delta}_{\GH}((X,\mathcal{X}),(Y,\mathcal{Y})) = \max\left\{\vec{d}_Z((\mathcal{X},\phi),(\mathcal{Y},\psi)),\vec{d}_Z((\mathcal{Y},\psi),(\mathcal{X},\phi))\right\}
\]
where
\[
\vec{d}_Z((\mathcal{X},\phi),(\mathcal{Y},\psi)) = \sup_{\alpha}\vec{d}_Z((\mathcal{X}_\alpha,\phi),(\mathcal{Y}_\alpha,\psi)).
\]
By \eqref{filtrationgromovhausdorff1}
and \eqref{filtrationgromovhausdorff2}, the equivalence inequality follows:
\[
\hat\delta_{\GH}((X,\calX),(Y,\calY))\leq d_{\GH}(\{X_\alpha\},\{Y_\alpha\})\leq (k+1)\hat\delta_{\GH}((X,\calX),(Y,\calY)).
\]

\subsection{Persistence Matching Diagrams}
\mbox{}%
\vspace{5pt}

Persistent homology is a fundamental tool in Topological Data Analysis (TDA). Its application to filtrations of simplicial complexes yields barcodes or persistence diagrams, where intervals or points represent the birth and death of homological features across the filtration. In the following sections, we assume the reader is familiar with basic notions of TDA. For those less acquainted with the subject, we recommend the introductory references \cite{munch2017user, virk2022introduction}.

Formally, let $B(X) = (S_X, m(\cdot))$  be the barcode of certain simplicial filtration of the point cloud $X$, where $S_X$ is the set of intervals and $m(I)$ is the multiplicity of the interval $I$. Furthermore, the \textit{representation of a barcode} is defined as follows \[Rep(B(X))=\{(I,i)\colon I\in S_X\text{ and }i\leq m(I)\}.\]
If $B_1=(S_1,m_1(\cdot))$ and $B_2=(S_2,m_2(\cdot))$ are two barcodes, the \textit{partial matching} between $Rep(B_1)$ and $Rep(B_2)$ is a bijection $\sigma\colon R_1\subset Rep(B_1)\to R_2\subset Rep(B_2)$ and a \textit{block function} between $B_1$ and $B_2$ is a function $\mathcal{M}_f\colon S_1\times S_2\to \mathbb{Z}_{\geq 0}\cup\{\infty\}$ such that \[\sum_{J\in S_2}\mathcal{M}_f(I,J)\leq m_1(I), \text{ and }\sum_{I\in S_1}\mathcal{M}_f(I,J)\leq m_2(J).\] $\mathcal{M}_f$ induced a partial matching between $Rep(B_1)$ and $Rep(B_2)$.

González-Díaz, Soriano-Trigueros and Torras-Casas presented in \cite{torras_rocio_2} a particular block function $\mathcal{M}_f$ to obtain partial matchings between persistence diagrams induced by a morphism $f$. In \cite{torrascasas2024stability}, González-Díaz and Torras-Casas gave stability results for the persistence matching diagrams based on a block function $\mathcal{M}_f^0$, where the morphism $f$ is the inclusion of $X\subseteq Z$ and they pay attention to the $0$--homology barcode.

For that purpose, they defined a Gromov--Hausdorff distance for metric pairs in as follows \[\tilde{d}_{\GH}((Z,X),(Z',X')):=\inf \left\{\max\left\{d_{\H}^{d_{M}}(\gamma_{Z}(Z),\gamma_{Z'}(Z')),d_{\H}^{d_{M}}(\gamma_{Z}(X),\gamma_{Z'}(X'))\right\} \right\},\]
where the infimum runs over all metric spaces $(M,d_M)$ and isometric embeddings $\gamma_Z\colon Z\to M$ and $\gamma_Z'\colon Z'\to M$.


It holds that
\[
\tilde{d}_{\GH}((Z,X),(Z',X'))\leq d_{\GH}((Z,X),(Z',X'))\leq 2\ \tilde{d}_{\GH}((Z,X),(Z',X'))
\]
illustrating that the two metrics differ at most by a factor of two. As a result, any stability argument that relies on one remains valid, up to constant scaling, when using the other.

\nocite{*} 
\printbibliography

\end{document}